\newcommand{\eg}{{\em e.g.}\xspace}          % exempli gratia = for example
\newcommand{\partialx}[1]{\frac{\partial {#1}}{\partial x}}
\newcommand{\partialxx}[1]{\frac{\partial^2 {#1}}{\partial x^2}}
\newcommand{\fhn}{{FitzHugh-Nagumo}\xspace}    
\newcommand{\Jc}{{\mathcal{J}(c)}\xspace}    
\newcommand{\Lc}{{\mathcal{L}_c}\xspace}    
\newcommand{\MPM}{{-/+/-}\xspace}    
\newcommand{\A}{{\mathcal{A}}\xspace}    
\newcommand{\Q}{{\mathcal{Q}}\xspace}    
\newcommand{\F}{{\mathcal{F}}\xspace}    
\newcommand{\Lex}{{L^2_{ex}}\xspace}    
\newcommand{\Hex}{{H^1_{ex}}\xspace}
\newcommand{\ds}{\displaystyle}
\newcommand{\vectwo}[2]{ \left( \begin{array}{c} #1 \\ #2 \end{array} \right)}
\newcommand{\mattwo}[4]{\left( \begin{array}{cc} #1 & #2 \\ #3 & #4 \end{array}
\right)}
\newtheorem{lem}{Lemma}
\newtheorem{defn}{Definition}
\newcommand{\secref}[1]{Section~\ref{#1}}
\newcommand{\figref}[1]{Figure~\ref{#1}}
\newcommand{\defref}[1]{Definition~\ref{#1}}
\newcommand{\algref}[1]{Algorithm~\ref{#1}}
\newcommand{\appref}[1]{Appendix~\ref{#1}}
\newcommand{\tabref}[1]{Table~\ref{#1}}
\title{A global algorithm for the computation of traveling dissipative solitons}
\author{Y.S. Choi \and J. M. Connors}
\institute{Y.S. Choi \and J. M. Connors \at 
University of Connecticut, 
Department of Mathematics, 
341 Mansfield Road U-1009, 
Storrs, CT 06269-3009 \\\email{jeffrey.connors@uconn.edu}}
\spnewtheorem{alg}{Algorithm}{\bf}{\rm}
\begin{document}

\maketitle

%\noindent
\begin{abstract}
%{\em Abstract:}
An algorithm is proposed to calculate traveling dissipative solitons for the 
FitzHugh-Nagumo equations. It is based on the application of the steepest descent method 
to a certain functional.  This approach can be used to find solitons whenever the 
problem has a variational structure.  Since the method seeks the lowest energy 
configuration, it has robust performance qualities.  It is global in nature, so that 
initial guesses for both the pulse profile and the wave speed can be quite different from 
the correct solution.  Also, 
%traditional ways experienced hiccup
bifurcations have a minimal effect on the performance.  
With an appropriate set of physical parameters in two dimensional domains,
we observe the co-existence of single-soliton and 2-soliton solutions together with
additional unstable traveling pulses.  The algorithm automatically calculates these 
various pulses as the energy minimizers at different wave speeds.  In addition to 
finding individual solutions, this approach could be used to augment or initiate 
continuation algorithms.  
%
%The approach 
%may be easily implemented for domains in multiple dimensions with boundaries, thereby 
%providing a convenient way to generate profiles that could be used to initiate numerical 
%continuation software when traditional methods may be comparatively cumbersome.  
%Our approach also provides an alternative to computing along bifurcation curves in order to 
%find multiple traveling pulse solutions for the same physical parameters.  
%Numerical 
%implementations are provided, based on two types of boundary conditions that are 
%derived for a truncated (computational) domain.  The traveling pulses decay toward zero, 
%moving away from the wave front.  The decay is quick in front, but much slower in the tail 
%region.  Imposition of homogeneous Dirichlet boundary conditions is possible for a long domain, but asymptotic boundary 
%conditions shorten the required domain length considerably and save on computational expense.  

%The computed wave profiles are tested numerically using a parabolic solver and, for stable solutions, the 
%speed and shape are maintained very well for a large number of time steps.  
%In computational experiments, we also validate a theoretical asymptotic relationship on how the fastest traveling pulse speed depends on a certain physical parameter. 
\end{abstract}

\date{}

\keywords{~\fhn \and traveling wave \and traveling pulse \and dissipative solitons \and minimizer \and steepest descent}

%input separate .tex files for paper sections

% introduction
\section{Introduction}\label{sec:introduction}

Patterns occurring in nature fascinate. They are ubiquitous in all kinds of physical, chemical and biological systems.
{Very often, localized structures are observed, like pulses, fronts and spirals.} % \cite{L2013}.  
These 
%simple 
fundamental building blocks then self-organize into patterns as a result of their mutual interaction. This may involve a pattern front moving
across a homogeneous ambient state after some kind of destabilization.  
%Finally it may settle down into  a standing wave, a traveling  or 
%a spiral wave, among many other possibilities.  
{In many cases, the ambient state is} 
uniform in all directions.  Traveling pulses are localized structures
 which relax to the same ambient state; they are therefore especially important in the dynamic transition phase 
 seen in many experiments and model simulations. % alike. 
 Stable pulses (which are sometimes called {{\em spots}} in multidimensional domains), whether stationary or moving, can be particle-like in many 
 circumstances and are {often} referred to
 as dissipative solitons in {the} physics literature. {For example, see} the books by \cite{Nish2002, AA2005, L2013} and the references therein. In a three-component 
 activator-inhibitor system studied by both physicists and
 mathematicians, it was observed that
fast moving solitons collide to annihilate one another while slow ones bounce off from one another, and an unstable standing pulse can split into two
solitons; see \cite{PBL2005, EMN2006, KM2008, NTYU2007, HDKP2010}. 

Understanding the mechanisms behind such pattern formation has 
been an on-going struggle since Turing's landmark paper on morphogenesis, \cite{Tur1952};
 investigation has intensified in the last three decades in recognition of the importance {observed for various applications}.  
 Advances in mathematical studies lead to a deeper understanding:
such interactions involve a delicate balance between gain and loss, and the subsequent redistribution of energy and ``mass'' in the system; the ``mass'' can be chemical concentration,
light intensity or current density. Many dissipative soliton models, like Ginsburg-Landau and nonlinear Schrodinger equations, possess variational structures.  
Restricting our attention to reaction-diffusion systems,
activator-inhibitor type equations are the natural choices in modeling these phenomena, as they involve gain and loss; 
see \cite{PBL2005, L2013}. The two component 
FitzHugh-Nagumo equations and the three component activator-inhibitor systems serve as primary
models in such investigations; under suitable parametric restrictions the solutions are minimizers of some variational functionals.

{There are many theoretical studies on these activator-inhibitor 
systems that employ various methods of analysis for one or higher dimensional 
domains in different parametric regimes, for example} 
 \cite{CC2012, CC2015, CCH2016, HS2014, Mur2004, DRY2007, RW2003}. 
We are particularly interested {in the case} when the activator 
diffusivity is small (compared to that of the inhibitor), leading to an 
activator profile with a steep slope.  By using the tool of $\Gamma$-convergence {on the FitzHugh-Nagumo equations} to study its limiting geometric variational problems with a nonlocal term,
existence and local stability of radially symmetric standing spots in ${\mathbb R}^n$ have been completely classified for all parameters \cite{CCR2018, CCHR2018};
they are the first results on the exact multiplicity of solutions to these limiting equations ({from} $0$ up to $3$ standing pulses).
With similar restrictions on parameters in order to employ $\Gamma$-convergence,
a unique traveling pulse solution in the 1D case has also been shown recently \cite{CCF}. However, when we relax the conditions on the parameters
(so that one cannot employ $\Gamma$-convergence analysis), there can be co-existence of a  traveling pulse and two distinct fronts moving in
opposite directions \cite{CC_multi}.

Computational studies are also important, but there can be difficulties.  
Continuation algorithms are a common means to compute traveling and stationary waves. While there is no difficulty in finding 1D traveling spots of the 
FitzHugh-Nagumo equations, the 2D case is different. 
We extract an exact quote from \cite[p.8]{L2013}: ``Only in extreme parameter regimes, where numerical solutions are difficult to obtain, it can be analytically shown that propagating dissipative solitons exist as stable solutions of two-component, two- dimensional, reaction-diffusion systems''; the algorithms 
experience a hindrance.  At the same time, from  \cite[bottom of p.32]{EMN2006}:
  ``it is generally believed that 
{traveling} spots  for a two component system in the
whole  $\mathbb{R}^2$ do not exist''. The difficulty comes as a result of multiple bifurcations in the 2D domains. 
Sometimes, just in some small range of parameters there are 
multiple bifurcations occurring (the cusp in Figure~\ref{fig:fig2d_3} is related to bifurcation), resulting in many close-by solutions; in such situations it is not easy to find a good enough
 initial guess for the continuation algorithm.  Even when sucessful, convergence is not guaranteed.

{Another computational approach is to 
feed reasonable initial profiles into the time dependent problem and perform numerical 
time stepping; one then hopes that
it results in a more or less steady profile moving at a uniform speed after a long run. 
All unstable waves can never be found using such a method.
Even if a stable traveling wave exists, it is easy to miss the solution since usually they 
 are not global attractors for reaction-diffusion systems, as in the case of the~\fhn equations.  
 This method sometimes provides a good initial guess for a Newton-type algorithm or a numerical 
 shooting method if we are interested in 
 very accurate solutions.  %; \textcolor{red}{see}~\cite{Mi1982}.  
As we are interested in cases when $d$ is small, the resulting multiple temporal and spatial scales  in 
\eqref{eqn:fhn1} will  induce
excessive  computational effort, unless the initial approximation is extremely good so that the convergence 
to the traveling wave takes place in a short time.} 

{If we} restrict ourselves to dissipative solitons for which variational formulations are common, we {may} exploit the fact that 
they are minimizers.  
{In this paper, we develop a robust, global steepest descent 
algorithm to find traveling pulse solutions of the FitzHugh-Nagumo equations 
both in 1D and on an infinite strip domain in 2D with zero Dirichlet boundary 
conditions.  It is based on some recent theoretical understanding; see \cite{CC2012, CC2015}.}  
The algorithm works even without a good initial guess.  If a 
bifurcation occurs, it simply tracks the lowest energy configuration and filters out other high energy solutions generated as a result of bifurcation.  
Multiple bifurcations in a vicinity will therefore not affect the algorithmic 
performance. That explains why we are able
to find quite a few stable and unstable spots with relative ease in our computations. Future robustness tests on this algorithm will be conducted with other boundary conditions, widening the width of the strip domain,  and for traveling fronts.  
Another study \cite{CCD_new} also shows that the algorithm can easily compute all three traveling waves proved in \cite{CC_multi};
in fact, in addition to these 3 stable minimizers there are 2 unstable waves with the same physical parameters. 
The algorithm can also be {applied toward} 
the 3-component activator-inhibitor systems, which also possess variational 
structures.  
%.  In future studies, we will investigate if our algorithm can locate additional solutions due to its robustness.

We consider the~\fhn equations with domain $\Omega\subset \mathbb{R}^n$ in space, for 
time $0\leq t <\infty$.  Specifically, $\Omega=\mathbb{R}$ if $n=1$ and 
$\Omega=\mathbb{R}\times (-L,L)$ if $n=2$, with $0<L<\infty$.  The equations are: 
\begin{equation}
 \begin{array}{rl}
\ds u_t &= \ds \Delta u +\frac{1}{d}\left( f(u)-v\right) , \\
\ds v_t  & = \ds  \Delta v +u-\gamma \, v , 
\end{array} 
\label{eqn:fhn1} 
\end{equation}
plus initial conditions. 
Boundary conditions are needed for $n=2$; we consider $u=v=0$ on the boundary.  
%These equations do not have the gradient structure and are not quasi-monotone. 
Here $d$ and $\gamma$ are positive constants, and
$f(u)\equiv u(u-\beta )(1-u)$ with $0<\beta <1/2$ being a fixed constant.  
%When the term $\Delta v$ is absent and $n=1$, the coupled partial differential 
%equation on $u$ and the ordinary differential equation on $v$ are employed as 
% a simplified model \cite{F1961, NAY1962}
%for the Hodgkin-Huxley system \cite{HH1952} to describe electric potential transmission in a nerve axon.
%The transmission speed takes on only some distinct values.
%In {many} papers the terminology ``\fhn equations'' corresponds to the case when 
%$n=1$ and $\Delta v$ is absent \cite{Mi1982}.  
%In this paper, we reserve this nomenclature for \eqref{eqn:fhn1}.

%Recently, traveling wave solutions of the~\fhn equations for small $d$ in 1D  %with $n=1$ 
%have been obtained in~\cite{CC2015} by using a variational formulation of~\eqref{eqn:fhn1}.
%Their idea is interesting: first they
%find a global minimum point of  the variational functional in some constrained set $\A$; next show that
%the constraints are inactive so that  this unconstrained critical point solves the~\fhn equations. 
%Following the proof of Theorem 2 in \cite{CC2012}, we expect this critical point be a local minimum point.
%More important for our goal,
%the idea of their proof can be employed for a robust numerical algorithm to compute the traveling wave profiles as well
%as their wave speeds.  

Our algorithm is global in the sense that it does not require 
a good initial guess of the solution, only some guess in the set $\A$ that we show later 
is very easy to construct.  The wave speed is found as a root of a certain 
functional.  Multiple roots correspond to distinct waves, {hence multiple solutions for the same values of $d$, $\gamma$ and $\beta$.}   
We develop this approach here for the specific domains $\Omega$ described 
above, but it can be easily extended to more general boundary conditions and 
domains via standard adaptations of the variational techniques.  
%If desired, solutions could be used to 
%initiate numerical continuation for bifurcation analysis.  
After deriving our method, we will 
computationally illustrate how it is used to determine the existence and 
multiplicity of traveling pulse solutions to (\ref{eqn:fhn1}), and 
simultaneously compute the corresponding wave speeds and pulse profiles.  
In 2D, we observe a bifurcation and point out how the energy minimization 
property allows the algorithm to adapt automatically and find the dissipative soliton.

\subsection{An illustration of robustness}\label{sec:robustExample}  
As an example of the global convergence with $n=1$, choosing $\beta =1/4$, $d=0.0005$ and 
$\gamma=1/16$, our proposed method 
is able to identify one stable and one unstable wave profile, corresponding 
to different wave speeds $c_0$ and $c_1$, respectively.  
In order to check the stability of 
our computed traveling pulses, they are input as initial data into a parabolic 
solver for the time-dependent equation~\eqref{eqn:fhn1}. While some break up 
quickly, which indicates an unstable traveling wave; others just translate with 
the computed speed from our descent algorithm, verifying that they are stable. 

The situation is illustrated in~\figref{fig:fig0}; 
on the {top}, we show the result of inserting the unstable traveling 
wave profile into a parabolic solver: the wave just decays rapidly in time.  This demonstates the need of a 
global algorithm; if the initial guess for a stable wave profile is not close 
enough, the parabolic solver may not find it.  In contrast, our 
proposed method can start with the same unstable profile and 
be used to 
find the stable profile with speed $c_0$ as well. 
In the center 
of~\figref{fig:fig0}, this process is shown at various iteration counts, $n$, 
of the algorithm.  On the {bottom} of~\figref{fig:fig0}, we show the 
result of inserting the computed, stable wave profile into the parabolic 
solver. 
\begin{figure}[h!]
\includegraphics[width=\textwidth]{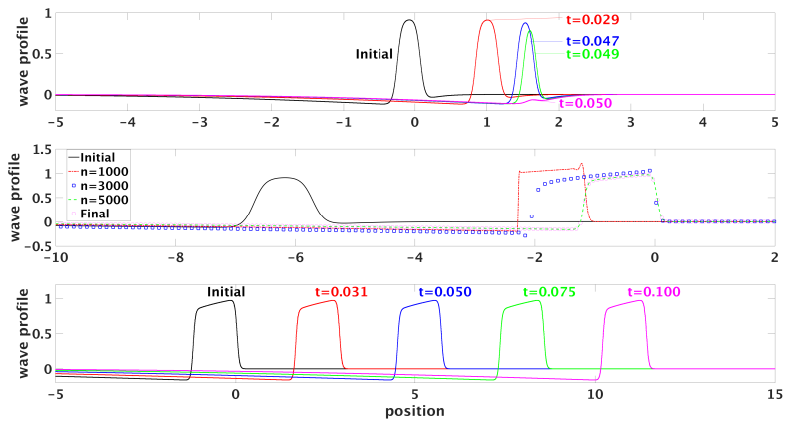}
\caption{{Top}: an initial guess for a traveling wave profile that decays rapidly in a parabolic solver.  Center: the same initial guess is used to initiate the proposed global method (after a change of variables, described later).  Snapshots are shown for various iteration counts, $n$.  {Bottom}: the resulting profile now remains steady in a parabolic solver.}\label{fig:fig0}
\end{figure}

The remainder of this paper is organized as follows.  
In~\secref{sec:vf} we give a variational formulation of the~\fhn equations so that a critical point corresponds to a traveling pulse, 
provided its wave speed also satisfies an auxiliary scalar algebraic equation.
The admissible set $\A$ in which we look for the critical point has to be described carefully.
In~\secref{sec:steepestDescent} we present a steepest descent algorithm which, together with the 
auxiliary scalar algebraic equation, computes a traveling pulse profile as well as its wave speed.
%The algorithm is global in nature and works without a good initial guess of what the solution will be like.
%It is remarkable
%that it can even be used to find multiple traveling waves. 
The details in~\secref{sec:vf}-\secref{sec:steepestDescent} are presented for 
one dimension of space.  In~\secref{sec:alg2D}, we explain the extension to two 
dimensions, which requires fairly minor modifications that are standard for 
variational analyses.  
%In~\secref{sec:asymbc} we construct asymptotic boundary conditions at the 
%numerical infinities; they will reduce the interval length required for computations.  
In~\secref{sec:results_1d} we present, compare and perform some checks on numerical results from our algorithm in one dimension of space.  
%with both the Dirichlet and the asymptotic boundary conditions. 
The fastest traveling pulse speed has an asymptotic limit as $d \to 0$, see  \cite{CC2015}; we check our numerical wave speed against this 
theoretical result and find excellent agreement. 
%{We also observe that the asymptotic boundary conditions enable a reduction 
%in the length of the computational domain by $90-95\%$, which saves us 
%much computational effort.}  
In~\secref{sec:results_2d} we present numerical results for two-dimensional 
domains and find as many as four traveling pulse solutions (with different wave speeds) 
for a single choice of 
parameters.  
A bifurcation separates these pulses into two distinct groups qualitatively, but the 
algorithm automatically computes them all without any special or prior knowledge of the pulse profiles. 
In~\secref{sec:summary} we give a short summary of {our results} 
and {discuss} some future work.  In the {remainder} of {this} paper, the 
terminologies {`pulse', `spot' and `dissipative soliton'} essentially mean 
the same thing.  We use `pulse' in 1D to conform to mathematicians' preference. In 2D, if 
the pulse is stable, we call it a (dissipative) soliton to emphasize its localized 
structure with {a} particle-like property.  

\section{A variational formulation for traveling pulse} \label{sec:vf}
In this section we take $n=1$ for~\eqref{eqn:fhn1}; the case $n=2$ is handled 
in Section~\ref{sec:alg2D}.  Following \cite[Theorems 1.1]{CC2015}
we impose the restriction $0<\gamma <4/(1-\beta)^2$. 
This is a necessary and sufficient condition for the straight line $u=\gamma v$ to cut the curve $v=f(u)$ only at the origin in the 
$(u,v)$ plane.
This guarantees that $(u,v)=(0,0)$ is the only constant-state equilibrium 
solution and hence eliminates the possibility of a traveling front.  
Under such a condition the cited Theorem 1.1 ensures that a traveling pulse solution exists when $d$ is sufficiently small with fixed
 $\gamma$ and $\beta$. 

We look for traveling pulse solutions $(u(x,t),v(x,t))\in \mathbb{R}^2$ of~\eqref{eqn:fhn1}
with $-\infty <x<\infty$ and $t\geq 0$.  For a wave speed $c$, which is not yet known, we require that 
\begin{equation}
u(x,t) = \tilde{u} (c(x-ct)) \ \ \text{and} \ \ v(x,t) = \tilde{v} (c(x-ct)) 
\label{eqn:fhn2} 
\end{equation}
for some smooth functions $\tilde{u}: {\mathbb R} \to {\mathbb R}$ and  $\tilde{v}: {\mathbb R} \to {\mathbb R}$.
Dropping the tilde in the notation and use $x$ to denote $\xi= c(x-ct)$, 
%Let $\xi = x-ct$ so that $u(x,t)=\tilde{u}(\xi)$ and $v(x,t)=\tilde{v}(\xi)$. It is clear that
%\begin{equation}
%\begin{array}{rl}
%\ds \frac{\partial^2{\tilde{u}}}{\partial \xi^2} +c \frac{\partial{\tilde{u}}}{\partial \xi} +\frac{1}{d}\left( f(\tilde{u})-\tilde{v} \right) &=0  , \\
% \ds \frac{\partial^2{\tilde{v}}}{\partial \xi^2} +c \frac{\partial{\tilde{v}}}{\partial \xi}+\tilde{u}-\gamma \, \tilde{v}  & =0. 
%	\end{array}
%\label{eqn:fhn2.5}
%\end{equation}
%To facilitate a variational formulation we rescale
% by taking $y=c\xi$.  The functions $U(y)\equiv \tilde{u}(\xi)$ 
%and $V(y)\equiv \tilde{v}(\xi)$ must now satisfy the equations 
%\begin{equation*}
%\begin{array}{rl}
%\ds c^2 \partialyy{U} +c^2 \partialy{U} +\frac{1}{d}\left( f(U)-V\right) &=0  , \\
%\ds c^2 \partialyy{V} +c^2 \partialy{V}+U-\gamma \, V  & =0. 
%	\end{array}
%\end{equation*}
%Hereafter, we relabel the independent variable $y$ again as $x$ and use 
%$(u(x),v(x))$ to denote the solution $(U,V)$.
the traveling pulse problem is to find $(u,v,c)$ satisfying 
\begin{equation}
 \begin{array}{rl}
\ds dc^2 \partialxx{u} +dc^2 \partialx{u} +f(u)-v &=0  , \\
\ds c^2 \partialxx{v} +c^2 \partialx{v}+u-\gamma \, v  &=0 
	\end{array}
	\label{eqn:fhn3} 
\end{equation}
with $(u,v)\to (0,0)$ as $|x|\to \infty$.  We can always let $c>0$.  

We introduce Hilbert spaces $\Lex({\mathbb R})$ and $\Hex({\mathbb R})$, corresponding to the 
inner products
\begin{equation}
\begin{aligned}
\langle v,w\rangle_{\Lex} &\equiv \int_{\mathbb{R}} e^x  vw \, dx \\
\text{and} \ \ \langle v,w\rangle_{\Hex} &\equiv \int_{\mathbb{R}} e^x \left\{ \partialx{v}\partialx{w} +vw  \right\}\, dx , 
\end{aligned}
\label{eqn:innerProd}
\end{equation}
respectively.  The induced norms are denoted by $\| \cdot \|_{\Lex}$ and 
$\| \cdot \|_{\Hex}$.  
A variational approach will be used to find weak solutions 
$(u,v)\in (\Hex)^2$ to ~\eqref{eqn:fhn3}. 

By solving (\ref{eqn:fhn3}b)
 we write $v=\Lc u$, where 
$\Lc : \Lex\to\Lex$ is a linear operator.  
%defined by 
%\begin{equation}
%v(x) = \Lc u\, (x) \equiv \int_{\mathbb{R}} G(x,s)\, u(s)\, ds. 
%\label{eqn:Greens}
%\end{equation}
%Here $G(x,s)$ is the Green's function 
%\begin{equation*}
%G(x,s)=\left\{ \begin{array}{cc}
%\frac{1}{c\sqrt{c^2 +4\gamma}}e^{r_2 (x- s)}, & \text{if} \ x<s, \\
%\frac{1}{c\sqrt{c^2 +4\gamma}}e^{r_1 (x- s)}, & \text{if} \ x>s , 
%\end{array}\right. 
%\end{equation*}
%with $r_1 = (-c-\sqrt{c^2 +4\gamma})/(2c)<-1$ and 
%$r_2 = (-c+\sqrt{c^2 +4\gamma})/(2c)>0$.  
It can be verified 
%by using the Green's function 
that $\Lc$ is a self-adjoint operator 
on $\Lex({\mathbb R})$, i.e. $\langle u_1, \Lc u_2 \rangle_{\Lex} = \langle \Lc u_1, u_2 \rangle_{\Lex}$ for any
$u_1, u_2 \in \Lex({\mathbb R})$. A way to see this is to write (\ref{eqn:fhn3}b) as $c^2 (e^x v_i')' - \gamma e^x v_i= - e^x u_i$
for $i=1,2$; then it is easy to check $\langle u_1, v_2 \rangle_{\Lex}=\langle v_1, u_2 \rangle_{\Lex}$, provided
there is sufficient control on $u_i$ and $v_i$ so that the boundary terms at infinity arising from integration by parts can be discarded.

For any given $c>0$,
consider the functional $J_c : \Hex \to \mathbb{R}$ defined by 
\begin{equation}
J_c (w) \equiv 
\int_{\mathbb{R}} e^x \left\{ \frac{dc^2}{2} w'^{\,2} 
+ \frac{1}{2} w\, \Lc w +\F (w) \right\}\, dx, 
\label{eqn:Jc} 
\end{equation}
where 
\begin{equation} \label{eqn:F}
\F (\xi) = -\int_0^\xi f(\tau) \, d\tau = \frac{\xi^4}{4} -\frac{(1+\beta) \xi^3}{3} 
	+\beta \frac{\xi^2}{2} .
\end{equation}
Making use of the self-adjointness of $\Lc$ on $\Lex({\mathbb R})$, its Fr\'echet derivative  is given by
\begin{equation}
J_c' (w)\phi \equiv \int_{\mathbb{R}} e^x \left\{
 dc^2 w' \,\phi' +\Lc w\, \phi -f(w)\phi \right\} \, dx 
\ \  \ \text{for all} \ w, \phi \in \Hex({\mathbb R}) .  
\label{eqn:JcPrime}
\end{equation}
A critical point $u$ of $J_c$ in $\Hex({\mathbb R})$ will satisfy the Euler-Lagrange equation associated with $J_c$, namely
\begin{equation} \label{eqn:intDiff}
- dc^2 (e^x u')'  - e^x f (u) + e^x \Lc u=0 \;.
\end{equation}
This integral-differential equation is equivalent to the Fitzhugh-Nagumo equations~\eqref{eqn:fhn3}.
Recall that  $\Hex({\mathbb R}) \subset C({\mathbb R})$, we are now ready for the following. 
\begin{defn}  A function $w\in C(\mathbb{R})$ is in the class $\MPM$ if 
there exist $-\infty\leq x_1 \leq x_2 \leq \infty$ such that (a) 
$w(x)\leq 0$ for all $x\in (-\infty,x_1] \cup [x_2,\infty)$ and (b) 
$w(x)\geq 0$ for all $x\in [x_1,x_2]$. 
\label{def:MPM}
\end{defn}
\begin{remark} \label{remark1}
(a) In the above definition, the choice of $x_1,x_2$ is not necessarily unique.
If $x_1=-\infty$ and $x_2=\infty$, then $w \geq 0$ on the real line. 
In case $x_1=x_2=\infty$, then $w \leq 0$ on the real line. Both examples 
are included in the
class $-/+/-$. \\
(b) A function $w$ is said to change sign twice, if 
$w \leq 0$ on $(-\infty,x_1] \cup [x_2, \infty)$, $w \geq 0$ on $(x_1,x_2)$ and $w\not \equiv 0$ in each of such
three intervals. 
\end{remark}

As $\beta<1/2$, there is a unique $\beta_1$ such that
$\beta<\beta_1<1$ with
$\F (\beta_1)=0$. 
 In addition, we take a constant $M_1 = M_1 (\gamma) \geq 1$ such that 
$f(\xi ) \geq  1/\gamma$ for all $\xi \leq -M_1$.  A class $\A$ of 
admissible functions to be employed in the variational argument is defined as 
follows.
\begin{defn}
\begin{equation} \label{eqn:A}
\A \equiv \left\{ w\in \Hex :   \| w\|_{\Hex}^2 = 2, \, 
-M_1 \leq w \leq 1,  w \; \mbox{is in the class}\; \MPM\right\} .
\end{equation}
\end{defn}
We restrict attention to $J_c: \A \to {\mathbb R}$. 
A global minimizer is known to exist in $\A$ for any fixed $c>0$. We can therefore let
\begin{equation} \label{eqn:jc1}
	\Jc \equiv \min_{w\in \A} J_c (w) , \ \ \text{for all} \ c>0.
\end{equation}
When $d \leq d_0$ for some sufficiently small $d_0$, it can be shown that 
$\Jc<0$ when $c$ is small, $\Jc>0$ when $c$ is large, and $\cal J$ is a continuous function. 
By the intermediate value theorem there is at least one $c_0>0$ such that 
$\mathcal{J} (c_0) =0$. Suppose $u_0$ is a
global minimizer in $\A$ when $c=c_0$, and we let $v_0={\cal L}_{c_0} u_0$, then $(u_0,v_0)$
can be shown to be smooth and $(u_0,v_0,c_0)$ is
a traveling pulse solution; thus $u_0$ is an unconstrained critical point of $J_{c_0}$.
We will give a heuristic argument 
in subsection~\ref{sec:zeroJc}  
on why $\Jc=0$ determines the wave speed. The rigorous proof is in \cite{CC2015}.

For any fixed $c$
we will construct a steepest descent algorithm in the next section to find  $\Jc$ and the corresponding minimizer $u$ of $J_c$.
A spatial translation of any traveling pulse solution 
remains a traveling pulse. This continuum of solutions will induce both theoretical and numerical difficulties. 
The condition $\|u_0 \|_{\Hex}^2=2$ in $\A$ makes sure that $u_0(\cdot+a)$ is not in $\A$
for any non-zero $a \in {\mathbb R}$ 
and hence eliminates translation of solution.

\begin{remark} \label{remark2}
The admissible set $\A$ defined above differs slightly from that in (2.9) of \cite{CC2015}, because 
\begin{enumerate}
\item the weighted $H^1$ norm  $\| w \|_{\Hex}$ employed here  is equivalent to
the norm  $\sqrt{\int_{\mathbb R} e^x w_x^2 \,dx}$ used in \cite{CC2015}. The new norm may be better 
when we perform numerical computations after truncating the real line to a finite domain.
\item Subsequent analysis in \cite{CC2015} leads to tighter bounds on a local minimizer of $J_c$;
it allows us to use the simpler
admissible set \eqref{eqn:A} for computational purposes.
\end{enumerate}
%\red{An algorithm to compute traveling fronts also follows from a 
%modification of the oscillation requirement in the admissible set ${\cal A}$.  
%This will be documented in a future paper.}  
\end{remark}

Suppose there are multiple traveling pulse solutions in  ${\cal A}$ for the same physical parameters. If $c_0$ is
 the fastest wave speed among these solutions,
it follows from Theorem 1.3 in \cite{CC2015} that 
\begin{equation} \label{eqn:fastSpeed}
d c_0^2 \to \frac{(1-2\beta)^2}{2} \ \ \text{for the fastest wave as} \ d \to 0.
\end{equation}
Our proposed numerical algorithm will compute all the traveling pulses irrespective of whether they are fast 
or slow waves; indeed we do find multiple waves in some physical parameter regime.
We will employ~\eqref{eqn:fastSpeed} to check the accuracy for our algorithm.

To investigate the stability of our computed traveling pulses, we feed them (after rescaling to the original variables)
  as initial conditions 
 into the time-dependent equations. Both stable and unstable traveling pulses are found
 in the admissible set $\A$. The parabolic
 solver serves as an independent check for our algorithm.

% subsection Jc=0
\subsection{Why the auxiliary equation ${\cal J}(c_0)=0$ determines the wave speed}\label{sec:zeroJc} 

It is not immediately clear that when ${\cal J}(c_0)=0$ then $c_0$ is the traveling pulse speed.
We give a heuristic argument in this subsection to enhance the understanding of our algorithm.

Let $c>0$ and $u \in \A$ be a minimizer of $J_c$.
Suppose  
\begin{enumerate}
\item the inequality constraints on $u$ is inactive, i.e. $-M_1<u<1$ on the real line;
\item the oscillation requirement $u \in \MPM$ is inactive, i.e. there is no interval on which 
$u=0$. This leads to a smooth $u$;
\item $u$ and its dervative have fast decay as $x \to \infty$ and they remain bounded
as $x \to -\infty$.
\end{enumerate}
We introduce a Lagrange multiplier $\lambda$ to remove the last remaining equality constraint $\| u \|_{\Hex}^2=2$
in $\A$, therefore 
$u$ is an unconstrained critical point of ${\cal I}_c$ with
\begin{equation} \label{eqn:Ic}
	{\cal I}_c(w)=J_c(w) + \lambda \left( \int_{\mathbb R} e^x \frac{1}{2} \left( w'^{\,2} + w^{2}\right) \,dx  -1 \right) \;.
\end{equation}
Hence for all $\phi \in \Hex$
\begin{equation} \label{eqn:icPrime}
0= {\cal I}'_c(u)\phi=J'_c(u) \phi + \lambda \int_{\mathbb R} e^x \left( u' \, \phi' + u\phi \right) \, dx\;.
\end{equation}
Set $\phi=u'$. Using \eqref{eqn:JcPrime} the above equation can be reduced to
\begin{equation*}
\int_{\mathbb R}  e^x \, \left\{ \frac{\partial}{\partial x} \left( \frac{dc^2}{2} u'^{\,2} + \F(u)+ \frac{1}{2} u \, \Lc u 
\right)  + \frac{\lambda}{2} \frac{\partial}{\partial x} (u'^2 +u^2) \right\} \, dx =0 
\end{equation*}
by using the self-adjointness of $\Lc$ on $\Lex({\mathbb R})$. 
An integration by parts  leads to $J_c(u)+ \lambda=0$ due to the assumed asymptotic behavior of $u$ and its derivative 
for large $|x|$.
Suppose $c_0$ satisfies $J_{c_0}(u)=0$, then $\lambda=0$.  Write this $u$ as $u_0$.

We now have  $u_0$ being a minimizer of ${\cal I}_{c_0}$ and $v_0={\cal L}_{c_0} u_0$.
As $u_0$ is an unconstrained critical point of ${\cal I}_{c_0}$,
 it will satisfy the Euler-Lagrange equation associated with ${\cal I}_{c_0}$. With $\lambda=0$ in \eqref{eqn:icPrime}, this
Euler-Lagrange equation is the same as $J'_{c_0}(u_0)=0$, which simplifies to~\eqref{eqn:intDiff}.
 In other words $(u_0,v_0,c_0)$ satisfies the FitzHugh-Nagumo
equations~\eqref{eqn:fhn3}.

\subsection{Minimizer $u$ is positive somewhere} \label{sec:signs}

\begin{lem} \label{lem:positive}
Let $c>0$ and $u$ be a global minimizer of $J_c: \A \to {\mathbb R}$. Then $\max u>0$.
\end{lem}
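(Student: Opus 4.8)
The plan is a reflection argument by contradiction. Suppose the global minimizer $u$ satisfied $\max u\le 0$, i.e.\ $u\le 0$ on all of $\mathbb R$, and take the competitor $w:=-u$. Three of the four defining properties of $\A$ pass to $w$ for free: $\|w\|_{\Hex}^2=\|u\|_{\Hex}^2=2$, since the $\Hex$-norm only sees $u^2$ and $u'^{\,2}$; $w\ge 0$ lies in the class $\MPM$ (take $x_1=-\infty$, $x_2=\infty$ as in Remark~\ref{remark1}(a)); and $-M_1\le 0\le w$. The only condition needing attention is $w\le 1$, i.e.\ $u\ge -1$, which I return to below.

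The crux is that reflection strictly lowers $J_c$. In the integrand of~\eqref{eqn:Jc} the gradient term $\tfrac{dc^2}{2}w'^{\,2}$ is invariant under $u\mapsto -u$; so is the quadratic term $\tfrac12 w\,\Lc w$, because $\Lc$ is linear, whence $\Lc(-u)=-\Lc u$ and $(-u)\Lc(-u)=u\,\Lc u$ pointwise; and for the remaining term the elementary identity $\F(-\xi)-\F(\xi)=\tfrac23(1+\beta)\xi^3$, read from~\eqref{eqn:F}, is $\le 0$ for $\xi\le 0$ and strictly negative whenever $\xi<0$. Since $u\in\A$ forces $u\not\equiv 0$ and $u$ is continuous with $u\le 0$, the set $\{u<0\}$ has positive measure, so $\int_{\mathbb R}e^x\F(-u)\,dx<\int_{\mathbb R}e^x\F(u)\,dx$, and adding the three terms gives $J_c(-u)<J_c(u)$. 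Provided $-u\in\A$, this contradicts the global minimality of $u$, and the lemma follows. (One may also note $J_c(u)>0$ here, using a standard maximum principle $u\le 0\Rightarrow\Lc u\le 0$, together with $\F\ge 0$ on $(-\infty,0]$ and $\int_{\mathbb R} e^x(u')^2\,dx>0$; this illuminates why reflection helps but is not itself needed.)

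The real work is the bound $w\le 1$, i.e.\ $u\ge -1$, since a priori $\A$ only gives $u\ge -M_1$ with $M_1(\gamma)\ge 1$. If the minimizer is already known to obey the sharper bound $u\ge -1$ (the kind of refinement alluded to in Remark~\ref{remark2}), we are done with $w=-u$. Otherwise one reflects and truncates: set $\tilde w:=\min(-u,1)$ and $\psi:=\max(-u-1,0)\ge 0$, so $-u=\tilde w+\psi$; using $\langle u,\Lc u\rangle_{\Lex}=\langle -u,\Lc(-u)\rangle_{\Lex}$ and $\Lc\psi\ge 0$ (maximum principle) one gets $\langle\tilde w,\Lc\tilde w\rangle_{\Lex}=\langle u,\Lc u\rangle_{\Lex}-2\langle\tilde w,\Lc\psi\rangle_{\Lex}-\langle\psi,\Lc\psi\rangle_{\Lex}\le\langle u,\Lc u\rangle_{\Lex}$, and since the gradient term and (by a short computation on $\F$) the potential term of $\tilde w$ are also at most those of $u$, still $J_c(\tilde w)<J_c(u)$. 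Now $\tilde w$ meets every requirement of $\A$ except that generally $\|\tilde w\|_{\Hex}^2\le 2$ rather than $=2$. Because $\Lc$, after cancelling $e^x$ from~(\ref{eqn:fhn3}b), is the constant-coefficient operator $c^2\partial_{xx}+c^2\partial_x-\gamma$ and hence commutes with translations, shifting $\tilde w$ rightward by $\tau:=\ln\!\bigl(2/\|\tilde w\|_{\Hex}^2\bigr)\ge 0$ multiplies both $\|\cdot\|_{\Hex}^2$ and $J_c$ by $e^\tau\ge 1$ (the weight $e^x$ supplies the factor), so the shifted function lies in $\A$ with energy $e^\tau J_c(\tilde w)$. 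This closes the case $J_c(\tilde w)\le 0$. I expect the main obstacle to be the residual case $J_c(\tilde w)>0$ together with $u$ dipping below $-1$ — which can occur only for large wave speeds, where $\Jc>0$. I would dispatch it by noting that on the bounded band $[-M_1,-1]$ the surplus $\F(u)-\F(1)\ge\F(-1)-\F(1)>0$ is bounded below and dominates the normalization deficit $\int_{\{u\le -1\}}e^x(u^2-1)\,dx$ that the rightward shift has to compensate, or else by invoking the sharp pointwise bound $u\ge -1$ from the finer analysis of \cite{CC2015}.
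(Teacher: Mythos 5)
Your core argument is exactly the paper's proof: reflect $u\mapsto -u$, note that the gradient and nonlocal terms of $J_c$ are invariant under the sign flip while $\F(\xi)>\F(-\xi)$ for $\xi<0$ gives a strict drop in energy, contradicting global minimality. The second half of your write-up, verifying the upper bound $-u\le 1$ needed for $-u\in\A$, concerns a point the paper's proof does not raise at all (it simply asserts $\tilde u=-u\in\A$); your truncate-and-shift patch is sound as far as it goes, but as you acknowledge it leaves the case $J_c(\tilde w)>0$ open, so to fully close that constraint check one would invoke the sharper pointwise bounds on minimizers from \cite{CC2015} mentioned in Remark~\ref{remark2}.
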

\begin{proof}
Suppose $u \leq 0$ for all $x$. As $u \not \equiv 0$, we have $\min u <0$. Define $\tilde{u} \equiv -u$. 
Then $\tilde{u} \in \A$ is non-negative and
\begin{eqnarray*}
J_c(\tilde{u})&=&  
\int_{\mathbb{R}} e^x \left\{ \frac{dc^2}{2} \tilde{u}'^{\,2} 
+ \frac{1}{2} \tilde{u}\, \Lc \tilde{u} +\F (\tilde{u}) \right\}\, dx \\
&<&  \int_{\mathbb{R}} e^x \left\{ \frac{dc^2}{2} {u}'^{\,2} 
+ \frac{1}{2} {u}\, \Lc {u} +\F ({u}) \right\}\, dx \\
&=& J_c(u) ,
\end{eqnarray*}
because from \eqref{eqn:F} we have $\F(\xi)> \F(-\xi)$ if {$\xi < 0$}, 
while the gradient energy {and nonlocal energy terms remain the same.} This contradicts $u$ being a global minimizer in $\A$.
\hfill  $\Box$
\end{proof}

Starting with an initial guess $w^0$, as the successive iterates $w^{(n)}$ from the steepest descent algorithm,
to be proposed in the next section,
 get closer to the minimizer $u$, the above Lemma ensures that
 $\max w^{(n)}>0$ for large enough $n$.
If $c=c_0$ with ${\cal J}(c_0)=0$, we have a stronger result:  $\max u_0 \to 1$ as $d \to 0$
 \cite[Theorem 8.6]{CC2015}, and $u_0$ changes signs exactly twice.  
 
\subsection{Monotonicity of $\Jc$ with respect to $d$} \label{sec:monotone}
 
 This is a simple observation, but will serve as a useful  guide to choose a proper range of $d$ for traveling pulse  
 phenomena in our numerics. It is clear that  $J_c$  depends on the parameter $d$ 
 besides $c$. Fix any $c$, $\gamma$, $\beta$ and $w \in \A$. Suppose $d_2 \geq d_1$,
 then $J_c(w;d_2)-J_c(w;d_1)= \int_{\mathbb R} e^x   \frac{{(d_2-d_1)}c^2}{2} w'^2 \, dx >0$ so that
${\cal J}(c;d_2) \geq {\cal J}(c;d_1)$. 
 
 Now let $w$  be a minimizer of $J_c(\cdot \,; d_1)$ when $d=d_1$. It follows that
 \begin{eqnarray*}
 J_c(w;d_2) & \leq &{\cal J}_c(c;d_1)+ \frac{(d_2-d_1)c^2}{2} \int_{\mathbb R} e^x (w'^2+w^2) \, dx \\ 
 & = & {\cal J}_c(c;d_1)+ (d_2-d_1)c^2.
 \end{eqnarray*}
Hence 
\begin{equation} \label{eq:orderJ}
{\cal J}(c;d_1) \leq {\cal J}(c;d_2) \leq {\cal J}(c;d_1) + (d_2-d_1) c^2 \;.
\end{equation}

% steepest descent
\section{A steepest-descent method for computing $\Jc$}\label{sec:steepestDescent} 
We continue in this section with $n=1$; one dimension in space 
for~\eqref{eqn:fhn1}, with the case $n=2$ discussed in Section~\ref{sec:alg2D}.  
Given a $c>0$,
we would like to find a global minimizer $u$ of $J_c $ in the admissible set  $\A$ so that we can  compute
$\Jc=J_c(u)$.
Qualitative features of $u$ have been given in 
\cite[Theorem 1.2]{CC2015} when $d$ is small;  however  quantitatively this
is only a rough guess of what the minimizer profile would be like and a global
algorithm is warranted.
At any  $w\in \A$,
since we seek a global minimizer,
it is natural to use the steepest descent tangent vector direction (with $\Hex$ norm as the metric)
at $w$ on the manifold
${\cal M} \equiv \{p \in \Hex: \|p \|^2_{\Hex}=2 \}$. 
Following the steepest descent direction will eventually lead us to the minimizer $u$.

If we do not need to stay on the manifold, for any small arbitrary change $\epsilon \phi$, we have
$J_c(w+\epsilon \phi)-J_c(w) = \epsilon J_c'(w) \phi + O(\epsilon^2)$. Hence the steepest descent direction is to optimize 
$J'_c(w) \phi$ subject to unit norm on $\phi$. In our case
a modification  is necessary to stay on the manifold ${\cal M}$.

Let the steepest descent direction in our case at any given $c>0$ and $w \in \A$ be denoted 
by $q=q(w,c)$, which is normalized so that $\| q\|_{\Hex}^2 =2$. 
If $\epsilon$ is small, we want $\tilde{w}=w+\epsilon q$ to satisfy
$\| \tilde{w} \|_{\Hex}^2=2$ to leading order of $\epsilon$. This amounts to
enforcing the orthogonality condition $\langle w,q \rangle_{\Hex} =0$ so that $q$ is a tangent vector 
on the manifold ${\cal M}$.
A small (second order) correction on $\tilde{w}$, to be described later, will give
a new $w_{new}$ on the manifold ${\cal M}$ and  result in  $J_c(w_{new}) < J_c(w)$.

Following the idea advocated in \cite{CM1993},
we introduce Lagrange multipliers $\lambda$ and $\mu$ 
to remove the equality constraints $\| q \|^2_{\Hex}=2$ and $\langle w, q \rangle_{\Hex}=0$. Therefore
$q$ can be found as an unconstrained critical point of
\begin{equation}
\begin{aligned} 
K_c \left(\phi \right) &\equiv J_c' (w)\phi
+\lambda \left(\frac{1}{2} \| \phi \|_{\Hex}^2 -1\right) 
+\mu \langle w,\phi \rangle_{\Hex}
\ \ \text{for all} \ \phi \in \Hex .  
\end{aligned} 
\label{eqn:K}
\end{equation}
Hence we have
\begin{equation} \label{eqn:kcPrime}
K'_c(q) =0 
\end{equation}
with
\begin{equation} 
K'_c(\phi )p  = J_c' (w)p 
+\lambda \langle \phi,p \rangle_{\Hex} +\mu \langle w,p \rangle_{\Hex}  \ \text{for all} \ p\in \Hex.  
\label{eqn:qWeak1}
\end{equation}
Combining \eqref{eqn:kcPrime} and \eqref{eqn:qWeak1}, we arrive at
\begin{equation} \label{eqn:qWeak}
 J_c' (w)p 
+\lambda \langle q,p \rangle_{\Hex} +\mu \langle w,p \rangle_{\Hex}=0  \ \text{for all} \ p\in \Hex.  
\end{equation}
Upon inserting $p=w$ in~\eqref{eqn:qWeak}, 
\begin{equation*}
J_c' (w)w 
+\lambda \langle q,w\rangle_{\Hex} +\mu \|w\|^2_{\Hex} = 0.  
\end{equation*}
As $\|w\|^2_{\Hex}=2$ and $\langle q,w \rangle_{\Hex}=0$, it is immediate that
\begin{equation}
\mu =-\frac{1}{2} J_c' (w)w ,
\label{eqn:muValue}
\end{equation}
which can be calculated, as $w \in \A$ is given.

Now we are ready to solve the linear equation~\eqref{eqn:qWeak} for $\lambda q$, which is parallel to the seach direction.  
It is not necessary to calculate $\lambda$.  Indeed by choosing $p=\lambda q$ in~\eqref{eqn:qWeak},
$J_c'(w)(\lambda q) = -\lambda^2 \| q\|_{\Hex}^2 <0$.  Thus we have 
\begin{equation} \label{eqn:sign}
J_c (w+\epsilon \lambda q)< J_c (w) \ \ \text{for small enough values} \ \epsilon>0 \;.
\end{equation}  

We rewrite ~\eqref{eqn:qWeak}  in strong form as follows
\begin{equation*}
 - \frac{\partial}{\partial x}\left( e^x \frac{\partial}{\partial x}(\lambda q + \mu w) \right)
 + e^x (\lambda q + \mu w) - \frac{\partial}{\partial x} \left(  dc^2  e^x \partialx{w} \right) - e^x f(w) + e^x \Lc w=0 
 \end{equation*}
with $\lambda q$ being the only unknown in this equation and 
 $|q(x)|\to 0$ as $|x|\to \infty$.  
In order to reduce numerical errors  in computing
$\lambda q$ later on, we 
introduce the auxiliary function $w^* = \lambda q +(\mu+dc^2) w$, which then 
solves 
\begin{equation}
-\Delta {w^*}-\partialx{w^*} +w^* = dc^2 w-\Lc (w)+f(w)
\label{eqn:wStar}
\end{equation}
with $|w^* (x)|\to 0$ as $|x|\to \infty$. One can write down its Green's function and a unique solution $w^*$ exists.
The map $\Q :\Hex \to \Hex$ such that $w\to \Q (w)\equiv w^*$ is therefore
well-defined.  
%In the next section we will document the numerical scheme to solve~\eqref{eqn:wStar} for $\Q(w)$; in
%the mean time we will assume that its numerical values  can be computed.

Nonlinear equations can only be solved numerically by an iterative scheme. Instead of writing $w^{(n)}$ to denote
the $n^{th}$ iterate, we will henceforth employ $w^n$ instead for notation simplicity. It should be clear from the context that 
we do not mean the $n^{th}$ power of $w$. Similarly $\alpha^n$ will be used for descent step size instead of $\alpha^{(n)}$.

Let $c>0$ be fixed.  Given an approximation $w^n\in \Hex$ for the minimizer 
of $J_c$, we solve $\Q (w^n)$ numerically and update by means of
\begin{equation}
w^{n+1} =w^n+\alpha^n \left( \Q(w^n)-(\mu+dc^2) w^n \right).  
\label{eqn:update1}
\end{equation}
Observe that $\Q(w^n)-(\mu+dc^2) w^n =\lambda^n q^n$, and $0<\alpha^n<1$ is some descent step 
size, to be discussed later. The positivity of $\alpha^n$ is a consequence of~\eqref{eqn:sign}.
Two problems need to be addressed.  One is that $w^{n+1}$ need not be in the oscillation class $\MPM$.
  The other problem is that even if $\| w^n\|_{\Hex}^2 =2$, the 
constraint $\| w^{n+1}\|_{\Hex}^2 =2$ need not be satisfied.  In either case, 
$w^{n+1}$ need not be in the class $\A$.  We introduce two additional operators 
to address these issues. 

The first operator $r$ clips any portions of the wave profile that may become 
positive outside of the region $[x_1,x_2]$ in~\defref{def:MPM}.  The 
clipped profile will then develop kinks and becomes non-smooth. 
\begin{defn}\label{defn:clipDomain}
\[
{C}_0^+ \equiv \left\{ w\in {C}(\mathbb{R} ): \ 
w(x)>0 \ \text{for some} \ x\in \mathbb{R} \ \text{and} \lim_{|x|\to \infty} w(x) =0 \right\} .
\]
\end{defn}
\begin{defn}\label{defn:clip}
Let $w\in {C}_0^+$ be given and define 
\[
\overline{x} \equiv \max \left\{ x\in\mathbb{R}: \ 
w(x) =\max_{y\in\mathbb{R}} w(y) \right\} . 
\]
Let $(x_1,x_2)$ be the largest open interval containing $\overline{x}$ such 
that $w(x)>0$ for all $x\in (x_1,x_2)$.  We define the clipping 
operator $r: {C}_0^+ \to \MPM$ such that for any $w\in {C}_0^+$,
\begin{equation}
r(w)(x) = \left\{\begin{array}{cc}
w(x), & \ \text{if} \ x\in (x_1,x_2) \ \text{or} \ w(x)\leq 0, \\
0,& \ \text{otherwise} . 
\end{array}\right.
\end{equation}
In case $w \leq 0$ everywhere, we define $r(w)=w$. 
\end{defn}
The second operator is a pure translation  of a given profile along the x-axis. Such
a shift operator is all that is required to enforce the constraint 
$\| w^{n+1} \|_{\Hex}^2 =2$.  
\begin{defn}\label{defn:shift}
The shift operator $s:\Hex \to \Hex$ is defined such that for any $w\in \Hex$,
\begin{equation}
\begin{aligned}
s(w) &= w(\cdot-\log \frac{1}{\omega} ), 
\end{aligned}
\end{equation}
 where $ \omega = \frac{1}{2} \|w\|^2_{\Hex}$.
\end{defn}
\begin{lem} \label{lem:shift}
	Given any $w\in \Hex$, $\| s(w)\|_{\Hex}^2 =2$.  
\end{lem}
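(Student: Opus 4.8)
The plan is to reduce the statement to a one-line change of variables in the weighted norm. Write $a \equiv \log\tfrac{1}{\omega}$ with $\omega = \tfrac12\|w\|^2_{\Hex}$, so that by \defref{defn:shift} we have $s(w)(x) = w(x-a)$. Here I tacitly assume $w\not\equiv 0$, so that $\omega>0$ and $a$ is well defined; this is the only case of interest, since the manifold ${\cal M}=\{p\in\Hex:\|p\|_{\Hex}^2=2\}$ contains no zero profile.

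First I would expand $\|s(w)\|^2_{\Hex}$ directly from the definition of the $\Hex$ inner product in \eqref{eqn:innerProd}, using that translation commutes with (weak) differentiation, $\tfrac{d}{dx}\,w(x-a) = w'(x-a)$:
\[
\|s(w)\|^2_{\Hex} = \int_{\mathbb R} e^x\Big\{\, w'(x-a)^2 + w(x-a)^2 \,\Big\}\,dx .
\]
Then I would substitute $y = x-a$, so that $dx = dy$ and $e^x = e^{a}\,e^{y}$, which gives
\[
\|s(w)\|^2_{\Hex} = e^{a}\int_{\mathbb R} e^{y}\big\{\, w'(y)^2 + w(y)^2 \,\big\}\,dy = e^{a}\,\|w\|^2_{\Hex}.
\]
Finally, since $e^{a} = e^{\log(1/\omega)} = 1/\omega = 2/\|w\|^2_{\Hex}$, the right-hand side equals $2$, which is the claim. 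The same computation simultaneously shows $\|s(w)\|_{\Hex}<\infty$ whenever $\|w\|_{\Hex}<\infty$, so that $s$ is indeed a well-defined map $\Hex\to\Hex$ as asserted in \defref{defn:shift}.

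There is no genuine obstacle here: the only subtlety is the exclusion of $w\equiv 0$ noted above, and the observation that translation invariance of Lebesgue measure is exactly what converts the shift into the multiplicative rescaling of the norm by $e^{a}$. This is the whole point of choosing the exponential weight $e^x$ in \eqref{eqn:innerProd} — a single pure translation can renormalize any nonzero profile onto the manifold ${\cal M}$, which is precisely the mechanism the shift operator $s$ exploits in the descent scheme.
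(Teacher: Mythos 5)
Your proof is correct and follows exactly the paper's own argument: expand the weighted norm, substitute $y=x-a$ so the exponential weight factors as $e^{a}e^{y}$, and use $e^{a}=2/\|w\|^2_{\Hex}$. The only addition is your (reasonable) remark excluding $w\equiv 0$, which the paper leaves implicit.
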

\begin{proof}
Let $\omega=\frac{1}{2} \|w\|^2_{\Hex}$  and
$a=\log (1/\omega)$.  It follows from~\defref{defn:shift} that 
\begin{equation*}
\begin{aligned}
\| s(w)\|^2_{\Hex} 
&=\int_{\mathbb{R}} e^x \left\{ \left(w'(x-a) \right)^2 +\left(w(x-a)\right)^2 \right\}\, dx .
\end{aligned}
\end{equation*}
Via the change of variables $y=x-a$, 
\begin{equation*}
\begin{aligned}
\| s(w)\|^2_{\Hex} 
   & =e^{a} \| w\|_{\Hex}^2 = 2.  
\end{aligned}
\end{equation*}
\end{proof}
Given $c>0$,
our steepest descent algorithm to compute $\Jc$  is as follows. 
\begin{alg}\label{alg:alg1}
Choose fixed parameters $0<\theta <1$ (see below), a relative error 
tolerance $0<\delta_1 <1$ and absolute error tolerances $0<\delta_2 <1$ 
{and 
$0<\delta_3 <1$}.  
Given an initial guess $w^0\in \A$ with $\max w^0>0$
 and initial descent step size 
$0<\alpha^0 <1$, we iterate as follows to generate updates $w^n$ for 
$n=1,2,\ldots$. 
\begin{enumerate}
\item Compute $v^n= \Lc w^n$.
\item Set $\mu^n =-\frac{1}{2} J_c'(w^n)w^n$.  
\item Set $Q^n  =\Q(w^n)$.  
\item Set $\tilde{w}^{n+1} = w^n +\alpha^n \left( Q^n-(dc^2+\mu^n) w^n \right)$.
\item Set $w^{n+1}(x) = s\left(r\left(\tilde{w}^{n+1}\right)\right)$.
\item Check descent; if $J_c (w^{n+1})>J_c (w^{n})$ then replace 
	$\alpha^n \leftarrow \theta \alpha^n$ and go to (3).  
\item Update the step size, $\alpha^{n+1}$. 
\item Repeat (1)-(6) until 
\begin{equation}
\left. 
\begin{array}{c}
J_c (w^n)\leq J_c (w^{n+1})+\max \left\{ \delta_1 |J_c (w^{n+1})|, \delta_2 \right\} \\ 
\text{and} \qquad 
\sup_{x\in \Omega} \left| w^{n+1} (x) - w^{n} (x) \right| \leq \delta_3 
\end{array}\right\} . 
\label{eqn:stop1}
\end{equation}
\end{enumerate}
\end{alg}
Heuristic methods for step (6) are discussed later.  

The first part of the stopping criterion~\eqref{eqn:stop1} is implemented with 
$\delta_2 \ll \delta_1$.  This ensures that the relative error is small 
when $|J_c (w^{n+1})|> \delta_2/\delta_1$.  In practice, when 
$|J_c (w^{n+1})|< \delta_2/\delta_1 \ll 1$ is very small we cannot enforce 
the relative accuracy constraint due to round-off error effects.  In such an event, 
 the criterion~\eqref{eqn:stop1} still requires the absolute error to be 
small. We note that this latter case is near the regime $\Jc=0$ that we are most 
interested in. {The role of $\delta_3$ is to ensure that the profile 
is converged completely in the tail region of a pulse (decaying to zero in the 
direction opposite of the wave propagation).  This is necessary because the 
exponential weight in the functional $J_c$ greatly diminishes the effect of tail 
perturbations on the energy.  In other words, an accurate functional value is achieved 
numerically before the tail is fully resolved.  Numerically, the supremum of 
$\left| w^{n+1} (x) - w^{n} (x) \right|$ is interpreted as the maximum over 
all grid points.}

% extension of algorithm to 2D 
\section{Extension of the method for two dimensions in space}\label{sec:alg2D} 
Here we discuss the adaptation of Algorithm~\ref{alg:alg1} for the case of two 
dimensions in space.  There are still many gaps in the study of traveling waves 
for the~\fhn equations in multiple dimensions.  For algorithmic purposes, the key open issue is 
the definition of the admissible set $\A$.  It is not clear how to define a 
class of functions like~$\MPM$ in multiple dimensions, so the clipping operator 
is not defined in this case, presently.  However, we have found in our 
experiments that the clipping operation was not necessary to compute traveling 
pulse solutions in two dimensions.  It seems to be sufficient to take the step 
size $\alpha^n$ in the algorithm to be small enough.  
{The advantage of the clipping operator in 1D is to allow for larger step sizes.}

We look for traveling wave solutions $(u(x,y,t),v(x,y,t))\in\mathbb{R}^2$ of~\eqref{eqn:fhn1} 
with $(x,y)\in\Omega=(-\infty,\infty)\times(-L ,L)$ and $t>0$.  %The solutions must  be of the form 
Let
\begin{equation}
u(x,y,t) = \tilde{u} (c(x-ct),cy) \ \ \text{and} \ \ v(x,y,t) = \tilde{v} (c(x-ct),cy)   
\label{eqn:fhn2_2d} 
\end{equation} 
for some smooth functions $\tilde{u}: \Omega \to {\mathbb R}$ and  $\tilde{v}: \Omega \to {\mathbb R}$.
After dropping the tilde in the notation and using $(x,y)$ to denote $(c(x-ct),cy)$, 
%Let $\xi = x-ct$ so that $u(x,y,t)=\tilde{u}(\xi,y)$ and 
%$v(x,y,t)=\tilde{v}(\xi,y)$. It follows that 
%\begin{equation}
%\begin{array}{rl}
%\ds \frac{\partial^2{\tilde{u}}}{\partial \xi^2} +
%\frac{\partial^2{\tilde{u}}}{\partial y^2} + 
%c \frac{\partial{\tilde{u}}}{\partial \xi} +\frac{1}{d}\left( f(\tilde{u})-\tilde{v} \right) &=0  , \\
% \ds \frac{\partial^2{\tilde{v}}}{\partial \xi^2} +
%\frac{\partial^2{\tilde{v}}}{\partial y^2} +
%c \frac{\partial{\tilde{v}}}{\partial \xi}+\tilde{u}-\gamma \, \tilde{v}  & =0. 
%        \end{array}
%\label{eqn:fhn2.5_2d}
%\end{equation}
%We rescale by taking $(X,Y)=(c\xi,cy)$.  The functions 
%$U(X,Y)\equiv \tilde{u}(\xi,y)$
%and $V(X,Y)\equiv \tilde{v}(\xi,y)$ must now satisfy the equations 
%\begin{equation*}
%\begin{array}{rl}
%\ds c^2 \left( \partialXX{U} +\partialYY{U} \right)+c^2 \partialX{U} +\frac{1}{d}\left( f(U)-V\right) &=0  , \\
%\ds c^2 \left( \partialXX{V} +\partialYY{V} \right) +c^2 \partialY{V}+U-\gamma \, V  & =0, 
%        \end{array}
%\end{equation*}
%on the rescaled domain $\Omega^* = (-\infty,\infty)\times (-cL,cL)$.  
%Hereafter, we relabel the independent variables $(X,Y)$ again as $(x,y)$ and use
%$(u,v)$ to denote the solution $(U,V)$.
the traveling pulse problem is to find $(u,v,c)$ on the rescaled domain $\Omega^* = (-\infty,\infty)\times (-cL,cL)$ satisfying 
\begin{equation}
 \begin{array}{rl}
\ds dc^2 \Delta u +dc^2 \partialx{u} +f(u)-v &=0  , \\
\ds c^2 \Delta {v} +c^2 \partialx{v}+u-\gamma \, v  &=0 
        \end{array}
        \label{eqn:fhn3_2d} 
\end{equation}
with $(u,v)\to (0,0)$ as $|x|\to \infty$ and $(u,v)=(0,0)$ on 
$\partial\Omega^*$.  

We introduce Hilbert spaces $\Lex({\Omega^*})$ and $\Hex({\Omega^*})$, corresponding to 
the inner products 
\begin{equation}
\begin{aligned}
\langle v,w\rangle_{\Lex} &\equiv \int_{\Omega^*} e^x  vw \, dx \, dy \\
\text{and} \ \ \langle v,w\rangle_{\Hex} &\equiv \int_{\Omega^*} e^x \left\{ \nabla{v}\cdot \nabla{w} +vw  \right\}\, dx \, dy , 
\end{aligned}
\label{eqn:innerProd_2d}
\end{equation}
respectively.  The induced norms are again denoted by $\| \cdot \|_{\Lex}$ and
$\| \cdot \|_{\Hex}$.  Let $W$ denote the subspace 
\begin{equation}
W\equiv \left\{ v\in \Hex\, |\, T_0 (v)=0 \right\}, 
\label{eqn:W}
\end{equation}
where $T_0$ is the trace operator on $\Hex (\Omega^*)$.  
The variational approach of Section~\ref{sec:vf} can be extended now to 
find weak solutions $(u,v)\in W\times W$ to ~\eqref{eqn:fhn3_2d}. 
As in the 1D case,  we write $v=\Lc u$ and the 
functional $J_c:\Hex\to\mathbb{R}$ is defined as 
\begin{equation} 
J_c (w) \equiv 
\int_{\Omega^*} e^x \left\{ \frac{dc^2}{2} \left| \nabla w\right|^{\,2} 
+ \frac{1}{2} w\, \Lc w +\F (w) \right\}\, dx \, dy .
\label{eqn:Jc_2d} 
\end{equation}

We restrict our attention to the following admissible set 
in order to avoid a continuum of solutions due to  translation: 
\begin{equation}
\A \equiv \left\{ v\in W\, : \, \| v\|_{\Hex}^2 =2 \right\}.
\label{eqn:A_2d}
\end{equation}
Suppose $u_c$ is a minimizer of $J_c$ in the admissible set ${\cal A}$, 
the traveling wave speed $c_0$ will be determined by $J_{c_0}(u_{c_0})=0$, and
$(u_{c_0}, v_{c_0}, c_0)$ is a traveling wave solution.  

We follow along in Sections~\ref{sec:vf}-\ref{sec:steepestDescent} and find that 
the equation~\eqref{eqn:wStar} for the update $w^*=\lambda q +(\mu +dc^2)w$ 
still holds in two dimensions, if one interprets $w^*\in W$ and the Laplacian 
operator correctly.  The operators $\Q:\Hex\to\Hex$ and $s:\Hex\to\Hex$ may be 
extended in a trivial way, so that Algorithm~\ref{alg:alg1} may still be thought 
to hold in two dimensions, so long as we do not use the clipping operator.  
Equivalently, define $r(w)=w$ for all $w\in \Hex$ to be the identity for our 
computations in two dimensions.  

%\red{Cut out the following Remark?}
\begin{remark} \label{remark_scale}
In theoretical studies, it may be more convenient to employ the scaling $(X,Y)=(c\xi,y)$ so that the transformed domain $\Omega^*$ is the same
as $\Omega$. For numerical computation, so long as we have the same number of mesh points in the vertical direction, there is essentially very little 
difference between the two scalings.
\end{remark}

% asymptotic b.c.
%\input{asymBC}

% computational results in one dimension
\section{Computation of pulses in one dimension}\label{sec:results_1d} 
We demonstrate how our method can be used to calculate traveling pulse 
solutions in the class~\MPM for one dimension in space,  allowing for the 
facts that the subclasses $+$, $-$, $-/+$ and $+/-$ are all subsets 
of~\MPM.  
%In case the solution does change sign twice, we search for wave 
%profiles that are negative, except on a single interval where the 
%profile {is positive}.  
The steepest descent algorithm will continue to work even if the iterates 
degenerate into functions in these subclasses. However, in our experiments 
we do not observe this to happen. 
{In~\secref{sec:varyCD} we} will investigate two aspects of the 
theory regarding traveling waves: the possibility of multiple traveling 
pulse solutions and the validation of the asymptotic 
relation~\eqref{eqn:fastSpeed}.  In both cases, the value of the parameter 
$d$ is important.  In~\cite[Theorem 1.1]{CC2015}, it is shown that when $d$ 
is small a traveling pulse solution must exist, but it is not known what 
happens for larger $d$ or if multiple pulse solutions may exist for a 
particular $d$.  The dependence~\eqref{eqn:fastSpeed} holds only for the 
fastest traveling pulse solution. 

In~\secref{sec:parabolicTest}, the computed traveling pulses are tested 
using a parabolic solver.  
{It will be helpful to distinguish between the meaning of the space 
variable $x$ in~\eqref{eqn:fhn1} versus the variable that represents space 
for~\eqref{eqn:fhn3} (up to a shift), let us say $z=cx$.  Hereafter, $z$ shall 
denote the space variable used in~\algref{alg:alg1}.  When we study the 
results using the parabolic solver or when we consider our traveling pulses 
as solutions of~\eqref{eqn:fhn1}, we instead use the variable $x$ for space.} 

\subsection{Numerical methods for~\algref{alg:alg1}}\label{sec:numerics_1d} 
Some {specific} numerical methods must be {adopted} for the computations 
in~\algref{alg:alg1}.  We do not seek to compare {different implementations}.  The goal 
is to investigate our algorithm assuming that each step is performed with 
reasonable accuracy, for which purpose there are myriad acceptable 
numerical methods.  Spatial discretizations are performed 
with standard, centered finite difference methods that are formally 
second-order accurate with respect to the uniform grid size, $h>0$.  
Numerical integrations were computed using the {composite} midpoint rule.
%applied to 
%each interval of consecutive grid points with centered approximations for 
%each term of the integrand.  
Shifting operations were handled by shifting the grid points themselves, 
rather than interpolating the shifted data onto a fixed grid.  This is easy 
to implement and avoids introducing interpolation errors at each step of 
the algorithm.  

Let $\Omega = (a,b)$ for $-\infty <a\ll 0 \ll b <\infty$ denote the 
domain (which may be shifted each iteration) and denote the computational 
grid points by $x_j = a+hj$, for $j=0,1,\ldots, N+1$.  Here, $(N+1)h=b-a$.  
In order to make some precise statements regarding our computations below, 
any functions, say $\psi (x)$, defined at the grid points will have 
approximate values $\psi_j\approx \psi (x_j)$, $0\leq j\leq N+1$.  
Due to the computational truncation of the domain, asymptotic boundary 
conditions are implemented; see~\appref{sec:asymbc} for details. 

For the parabolic solver (used to test stability), the same discrete 
methods are applied in space as described above, with 
Crank-Nicolson for the time evolution.  Newton's method is used for the 
nonlinearity.  However, the $u$ and $v$ computations are not done at the 
same time levels.  Rather, they are staggered by half a time step in order 
to numerically decouple their calculations, for efficiency.  The resulting 
method is formally of second-order accuracy in both space and time.  The 
idea of staggered 
space and time methods has been used often since the seminal work 
of~\cite{VNR1950}.  Unlike this early work with
hyperbolic conservation laws, our parabolic solver does not require
a staggered grid in space for stability.  Also, we move the grid points 
each time step in accordance with the calculated wave speed, in order to 
avoid using a very large domain to test the wave propagation over long 
times.  For this scheme, the wave profile would ideally appear to be static.  
If the computed profile or speed is not correct, it will appear as a 
deviation from the initial profile used in the solver.  

\subsection{Traveling pulse behavior for various values of $d$.}\label{sec:varyCD}
Since a traveling pulse solution of~\eqref{eqn:fhn1} corresponds to a root of 
$\Jc$, we first investigate the dependence of $\Jc$ on $c$ and $d$.  
We know from~\secref{sec:monotone} that $\Jc$ increases with $d$; this 
qualitative result, in particular \eqref{eq:orderJ},  helps us to 
quantitatively locate the right ranges of $d$ and $c$. 

With a fixed $d$, 
the values of $\Jc$ are calculated for various values of $c$ 
using~\algref{alg:alg1} until we find where $\Jc$ changes sign, thus signifying 
a root.  A good approximation is then calculated for the wave speed $c$, where 
$\Jc=0$, by the method of {\em regula falsi} (see \eg~\cite{SB2010}).  This is 
a root-finding algorithm that approximates the wave speed by using linear 
interpolation across the interval where $\Jc$ changes sign.  Upon completion, 
we also obtain the traveling pulse profile from~\algref{alg:alg1}.

We begin by providing examples of the curves $\Jc$ for 
$d=2e-3$, $d=5e-4$, $d=3e-4$ and $d=1e-4$.  
%The computational domain was 
%chosen to be wide enough to apply the homogeneous Dirichlet boundary conditions. 
The results are shown in~\figref{fig:results1} at a resolution of $100$ 
evenly-spaced samples of the wave speed per unit of $c$.  The computational 
grid spacing is $h=10^{-2}$.  Other parameters 
values in~\algref{alg:alg1} are provided 
in~\tabref{tab:tab1}.  For the relative accuracy tolerance $\delta_1$ and 
domain width $|\Omega|$ {(for spatial variable $z$)}, the values were 
adjusted within the ranges shown experimentally, depending on $c$.  
\begin{table}[h]
\centering
\begin{tabular}[c]{|c|c|c|c|c|c|c|c|}\hline 
$\beta$ & $\gamma$ & $\theta$ & $\alpha^0$ & $\delta_1$ & $\delta_2$ & $\delta_3$ & $|\Omega|$  \\ \hline
$1/4$ & $1/16$ & $1/2$ & $1/1000$ & $10^{-9}$--$10^{-7}$ & $10^{-14}$ & $10^{-3}$ & $160$--$480$ \\ 
\hline 
\end{tabular}
\caption{Parameter values for the tests corresponding to~\figref{fig:results1}.\label{tab:tab1}}
\end{table}

From Step 4 of~\algref{alg:alg1}, the numbers $Q^n-(dc^2+\mu^n) w^n$ 
represent the update to the wave profile at iteration $n$.  The sizes of these 
values were found to vary with both $c$ and $n$ significantly.  In order to 
define a rule for the step sizes $\alpha^n$, we introduced a normalization, 
first by writing 
$\alpha^n=\alpha_1^n/\alpha_2^n$ and then choosing 
\[
\alpha_2^n \equiv \max_{0\leq j \leq N+1} \left| Q^n_j-(dc^2+\mu^n) w^n_j \right| . 
\]
The value of $\alpha_1^n$ was allowed to either increase or decrease for 
purposes of Step 6 in~\algref{alg:alg1}.  We applied the rule 
$\alpha_1^{n+1} =\min \{ 1.1 \alpha_1^n , \alpha_1^0 \}$.  
It was observed that this approach sped up convergence compared to 
taking a constant $\alpha_1^n = \alpha_1^0$.  Finally, the initial profile used 
at $c=5$ was 
\begin{equation*}
w^0 (x) = \left\{ \begin{array}{cc}
1,& \ \text{if} \ -1 \leq x \leq 1, \\
0, & \ \text{otherwise} . 
\end{array}\right.
%\label{eqn:initialGuess}
\end{equation*}
This square-wave is independent of $c_0$ when using the $x$-coordinate, but 
when used in~\algref{alg:alg1}, we must rescale first by $z=c x$, so that 
\begin{equation}
w^0 (z) = \left\{ \begin{array}{cc}
1,& \ \text{if} \ -5 \leq z \leq 5, \\
0, & \ \text{otherwise} . 
\end{array}\right.
\label{eqn:initialGuess}
\end{equation}
%{$w^0 (z) = (1-z^2)\exp (-\frac{1}{2} z^2)$}.  
A step size $\Delta c$ with $|\Delta c|=0.01$ was used to change the wave speed to 
$c+\Delta c$. The value of $\mathcal{J} (c+\Delta c)$ was then calculated using 
the computed minimizer corresponding to $\Jc$ as the initial guess 
{$w^0 (z)$}.  This process was repeated to generate our results.  

%An alternative approach would be to use the first converged solution to 
%initiate a numerical continuation algorithm. Although this latter approach 
%would require much more coding effort, it might be more efficient in terms 
%of computing time.  As discussed in~\secref{sec:introduction}, the 
%motivation for~\algref{alg:alg1} is to compute an initial pulse profile, 
%particularly in multiple dimensions of space, and to investigate the 
%behavior of the functional $J_c$ and connections to existence and 
%stability of pulse solutions.  
\begin{figure}
\includegraphics[width=\textwidth]{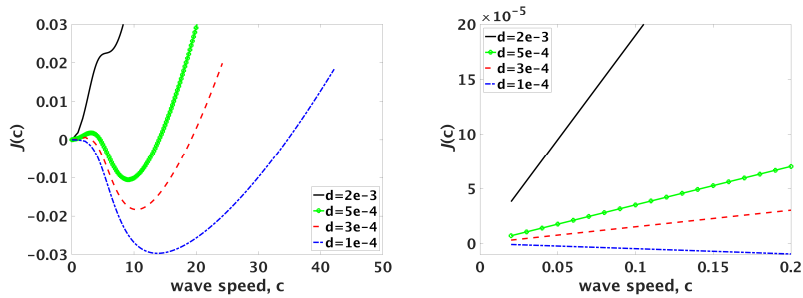}
\caption{The computed values of $\Jc$.  Left: note the existence of multiple 
traveling pulse solutions for $d=3e-4$ and $d=5e-4$.  Right: note that the 
curves do not cross the horizontal axis near $c=0$.  For small values of $d$, 
the traveling pulse solution might be unique in the class~\MPM.  {For large 
values of $d$, there is no traveling pulse solution.}}  
\label{fig:results1}
\end{figure}

In~\figref{fig:results1}, note that $\Jc$ crosses the horizontal axis twice 
when $d=5e-4$ and $d=3e-4$, but only once in case $d=1e-4$.  {Also, we see 
that for large enough values of $d$ there is no traveling pulse solution in the 
class $\MPM$.}  In case of 
multiple roots of $\Jc$, let $c_1=c_1 (d)$ and $c_0=c_0 (d)$ denote the smaller 
and larger of the wave speeds, respectively, such that $\Jc=0$.  Our 
computations suggest that there exists some critical value $d_{crit}$ such that 
\[
\lim_{d\to d_{crit}^+} c_1(d) =0.
\]
Furthermore, for $d<d_{crit}$ the traveling wave solution might be unique in 
the class~\MPM.  Stability of the traveling pulses is discussed below 
in~\secref{sec:parabolicTest}.

%The values of $\mathcal{J}(c_1)$ and $\mathcal{J}(c_0)$ are provided 
%in~\tabref{tab:tab2}.  In addition, 
In~\tabref{tab:tab2} we have provided the computed values of the ratio 
\[
\eta \equiv	\frac{2dc_0^2}{(1-2\beta)^2}. 
\]
In accordance with~\eqref{eqn:fastSpeed}, as the value of $d$ decreases we see 
the ratio $\eta$ becomes closer to the limiting value of $\eta=1$.  

\begin{table}[h]
\centering
\begin{tabular}[h]{|c|c|c|c|}\hline 
$d$ & $c_1$ & $c_0$ & $\eta$ \\ \hline
$5e-4$ & $4.58$ & $14.04$ & $0.79$ \\ \hline
$3e-4$ & $3.14$ & $19.18$ & $0.88$ \\ \hline
$1e-4$ & --- & $34.70$ & $0.96$ \\ 
\hline 
\end{tabular}
\caption{Computed values of the wave speeds $c_0$ and $c_1$ for traveling pulse solutions.\label{tab:tab2}}
\end{table}

\subsection{Testing of candidate traveling {pulse} profiles}\label{sec:parabolicTest}
Our computed traveling pulses are tested here via the method 
in~\secref{sec:numerics_1d}.  That is, the computed wave profiles,
after scaling back {from spatial variables $z$ to $x$}, are 
used to initiate a parabolic solver and the computational window is moved at a 
rate of one grid length per time step.  The grid length, $h$, and time step size, $\Delta t$, are 
related by $c\Delta t=h$ with $c$ the computed speeds in~\tabref{tab:tab2}, so 
that the wave profile should move at the same rate as the computational 
window.  A stable profile should then retain its shape and speed for a long 
time.  
The calculations were run for each case until either the profile was observed 
to break up or until the pulse propagated the distance of one computational domain 
length.  

For the slower wave profiles with speed $c_1$, our tests showed that the 
profiles were not maintained by the parabolic solver, nor did they evolve into 
a new traveling pulse profile.  Indeed, the profiles broke up completely after 
a relatively small time.  An example is shown for $d=5e-4$ 
in~\figref{fig:fig0} ({top}).  The slower traveling pulse 
solutions that exist for larger values of $d$ are unstable.  

For the faster wave speeds denoted by $c_0$, we observed that the 
wave profiles remained stable.  In~\figref{fig:results2} 
the initial data is plotted together with the final data, upon completion of the 
parabolic solver run.  The final data 
is shifted left in space by one domain length for a 
direct, visual comparison with the initial data.  The profiles have 
traveled the length of their computational domain and retained their shape 
and speed very well in all cases.  
%{A large number of time steps have been 
%taken in these computations; a total of $160000$, $300580$ and $960000$ for 
%cases $d=5e-4$, $d=3e-4$ and $d=1e-4$, respectively.}  
\begin{figure}
\includegraphics[width=\textwidth]{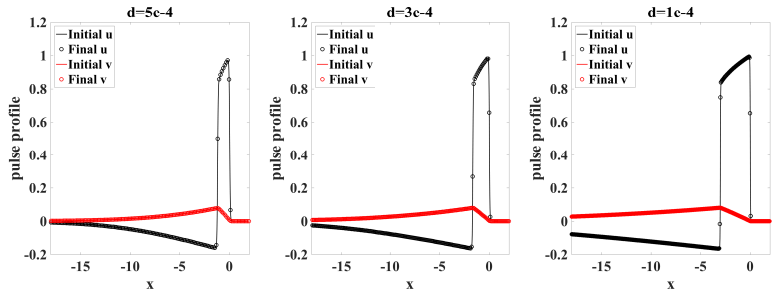}
\caption{Computed pulse profiles, shifted back 
in space by one domain length for comparison with the initial profiles, for 
cases $d=5e-4$ (left), $d=3e-4$ (middle) and $d=1e-4$ (right).  Circle 
markers are shown $200$ grid points apart for the final profiles.  The initial 
and final profiles are visually identical, indicating stability.}  
\label{fig:results2}
\end{figure}

% computational results in 2D
\section{Computation of pulses in two dimensions}\label{sec:results_2d} 
In this section we demonstrate the calculation of traveling (dissipative) solitons 
in a multiple-dimensional domain.  Partial curves $\Jc$ are shown for four
values of $d$, illustrating cases where there are multiple, one or no 
solutions found, including solitons and unstable pulses.  
%We discuss the behavior of the computed minimizer profiles as $c$ varies.  We observe 
A fundamental difference between the one- and two-dimensional cases 
relates to an occurrence of bifurcation in the latter case, wherein a soliton is observed to split 
into two co-evolving spots {in the direction perpendicular to their motion}.  

We take $\Omega = (-\infty,\infty)\times (-L,L)$, with $L=1$ in all cases.  
As discussed in~\secref{sec:alg2D}, we rescale the domain to 
$\Omega^* = (-\infty,\infty)\times (-cL,cL)$; the truncated, computational 
domain for~\algref{alg:alg1} is then chosen to be $[a,b]\times[-c,c]$.  
Recall that the domain is shifted during iterations of~\algref{alg:alg1}, so we only specify the domain length $b-a$ below.  Let $(x,y)$ and 
$(x^*,y^*)$ denote elements of $\Omega$ and $\Omega^*$, respectively. 

Numerical methods to compute pulse profiles and for numerical integration 
are simple extensions of the methods described in~\secref{sec:numerics_1d} 
into two dimensions.  That is, standard second-order, centered 
finite differences were used on a uniform rectangular grid 
to implement~\algref{alg:alg1}. 
Numerical integration was implemented using midpoint approximations on 
each grid rectangle, also with centered, second-order rules to evaluate the 
terms of the integrands.  The parabolic solver for stability tests is also 
analogous to that of~\secref{sec:numerics_1d}, using centered finite 
differences in space and time stepping via Crank-Nicolson.  Other relevant 
parameter values are given in~\tabref{tab:tab4}.  We apply asymptotic 
boundary conditions, as per~\appref{sec:asymbc}.  

\begin{table}[h] 
\centering 
\begin{tabular}[c]{|c|c|c|c|c|c|c|}\hline 
	$\beta$ & $\gamma$ & $\theta$ & $\alpha^0$ & $\delta_1$ & $\delta_2$ & {$\delta_3$} \\ \hline 
$1/4$ & $1/16$ & $1/2$ & $10^{-5}-10^{-3}$ & $10^{-6}$ & $10^{-12}$  & {$10^{-3}-10^{-1}$} \\ 
\hline 
\end{tabular} 
\caption{Parameter values for the tests corresponding to~\figref{fig:fig2d_1}.\label{tab:tab4}}
\end{table} 

The functional values $\Jc$ are shown for three values of $d$ 
in~\figref{fig:fig2d_1}.  For these computations we fixed $b-a=280$, 
with $5600$ grid intervals from $x^*=a$ to $x^*=b$ and $80$ grid intervals from 
$y^*=-c$ to $y^*=c$.  We observe that for $d=5e-4$ and $d=7e-4$ a single 
traveling {soliton} is found; that is, there is a single root of $\Jc$ at 
$c=c_0$.  We calculate $c_0\approx 13.74$ for $d=5e-4$ and $c_0\approx 10.54$ 
for $d=7e-4$.  When $d=9e-4$, the root is lost and no solution is found.  

\begin{figure}
\includegraphics[width=\textwidth]{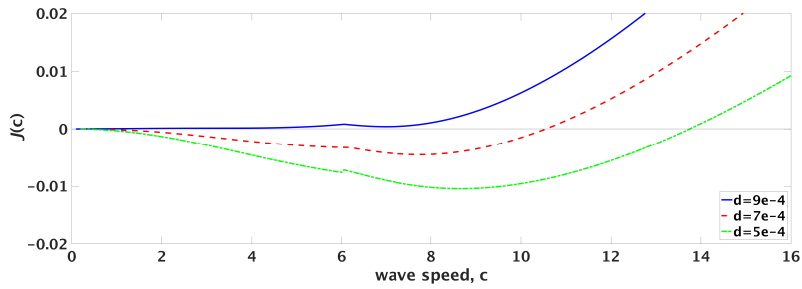}
\caption{The computed values of $\Jc$ in two dimensions, for three choices 
of $d$.  For smaller values of $d$ we find one traveling pulse solution.  For 
$d=9e-4$, $\Jc$ remains above the horizontal axis and we observe no solution.  
A transitional behavior is observed for values of $c$ near $6$, due to the 
splitting of a soliton.}  
\label{fig:fig2d_1}
\end{figure}

The computed {solitons} for {both} cases $d=5e-4$ and $d=7e-4$ were 
{confirmed} to be 
stable upon inserting these as initial conditions for a parabolic solver and 
{allowing the spots to propagate for the length of one computational domain.}  
%That is, we rescale the 
%computational domain $[a,b]\times[-c_0,c_0] = \Omega^*$ back to 
%$[a/c_0,b/c_0]\times[-1,1]\subset \Omega$ and define the characteristic time to 
%be the domain length (in $x$) divided by the wave speed; 
%$t_{char}=(b-a)/(c_0)^2$.  
Upon completion, we shift the {solitons} back to 
the left by the {same distance}, for a direct comparison with the 
initial profile.  
{In~\figref{fig:fig2d_2}, we show that the initial and final profiles are 
visually identical.}  

\begin{figure}
\includegraphics[width=\textwidth]{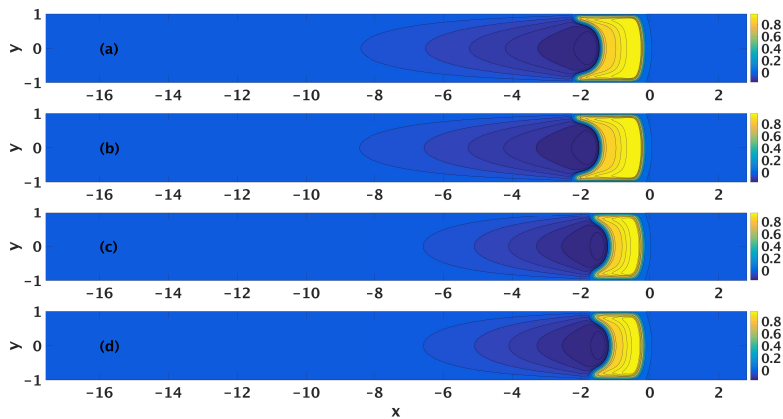}
\caption{Contour plots of $u$.  Plot (a) shows the {soliton} profile, as 
computed using~\algref{alg:alg1} for $d=5e-4$, rescaled to variables $(x,y)$.  
The profile is then used to initiate a parabolic solver, then run until 
{the soliton has propagated one domain length.}  
Plot (b) shows the resulting 
profile, shifted back horizontally 
%by $c_0 t_{char}$ 
to compare with the 
initial profile.  For $d=7e-4$, plot (c) shows the initial profile and (d) 
shows the final profile.  The final profiles are visually identical to the 
initial profiles.  These {are} stable; that is, they are traveling 
{dissipative} solitons.}  
\label{fig:fig2d_2}
\end{figure}

A close inspection of~\figref{fig:fig2d_1} reveals that $\Jc$ appears to have 
a local maximum near $c=6$.  This is related to a bifurcation 
%of the traveling pulse solution 
that {occurs}. %within a narrow region of parameter space.  
{As we track only the minimum energy curve, there may be additional secondary bifurcations associated with
other bifurcation branches, resulting in many close-by solutions.} 
To illustrate this effect clearly, we consider the case of 
$d=8.8e-4$, for which the values of $\Jc$ are plotted in~\figref{fig:fig2d_3}.  
The curve $\Jc$ versus $c$ has two smooth branches, separated near $c\approx 6.1$.
For larger $c$, the minimizer profile has one contiguous positive region.  That is, 
{there is a single soliton.}  As $c$ decreases, a separation 
%of the pulse 
into two parallel {solitons} serves to reduce the energy.  
This qualitative state persists for the minimizer profile as $c$ decreases 
toward zero.  
As a result, we found four roots of $\Jc$ corresponding to two 
{solitons} and two unstable traveling pulses.  

\begin{figure}
\includegraphics[width=\textwidth]{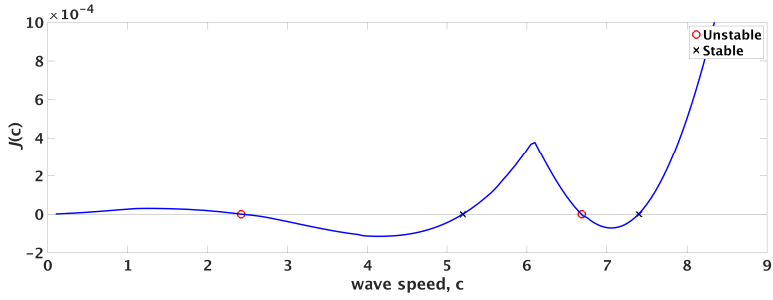}
\caption{The computed values of $\Jc$ for $d=8.8e-4$.  A bifurcation 
%of the minimizer profile $u$ 
occurs near $c\approx 6.1$. As a result, four 
%traveling pulse 
solutions are found; two {solitons} and two are {unstable pulses}.}
\label{fig:fig2d_3}
\end{figure}

We denote the four wave speeds for the traveling pulses by $c_0\approx 7.4067$, 
$c_1\approx 6.6864$, $c_2\approx 5.1752$ and $c_3\approx 2.4181$.  The unstable 
pulses correspond to $c_1$ and $c_3$.  In~\figref{fig:fig2d_4} we show the 
unstable computed profiles $u$, rescaled to variables $(x,y)$, as computed 
by~\algref{alg:alg1}.  

The {stable solitons} correspond to $c_0$ and $c_2$.  In~\figref{fig:fig2d_5}  we show 
these, rescaled to variables $(x,y)$, as computed 
by~\algref{alg:alg1} and also after running the parabolic solver to demonstrate 
stability.  
%The characteristic time $t_{char}=(b-a)/(c_j)^2$ for $j=0,2$ depends on the 
%wave speed.  
For visualization purposes, we do not show the entire domain.  We note 
that the computational grid used for these computations with $c> 6.1$, corresponding 
to the single-{soliton} solution, was the same as for other computations in this 
section.  However, for $c<6.1$ the computational domain was shortened so that $b-a=50$, 
with 4000 intervals between $x^*=a$ and $x^*=b$ and $160$ computational intervals 
between $y^*=-c$ and $y^*=c$.  This finer computational grid was needed to approximate 
the two-{soliton} solution.  In~\figref{fig:fig2d_5} we note that the computed {solitons} 
retain their shapes well, but they do not travel with precisely the computed 
wave speeds $c_0$ and $c_2$.  We believe this is simply due to numerical error in 
computing the functional value $J_c (u)$, which could be reduced using a finer 
computational grid or more accurate finite difference method.  
For $d=8.8e-4$, the values of $\Jc$ remain very small over a wide range of $c$ values.  
As a result, it is more difficult to compute the location of the roots of $\Jc$ as 
compared to other examples in this paper.  

\begin{figure}[h!]
\includegraphics[width=\textwidth]{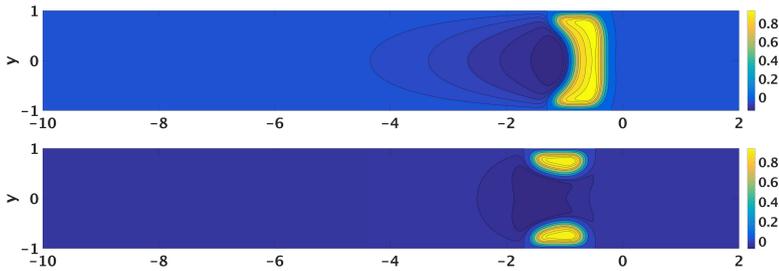}
\caption{Contour plots of the unstable traveling pulse profiles $u$ for $d=8.8e-4$.  
Top: the profile with wave speed $c_1$.  Bottom: the {double}-pulse with 
wave speed $c_3$.} 
\label{fig:fig2d_4}
\end{figure}

\begin{figure}
\includegraphics[width=\textwidth]{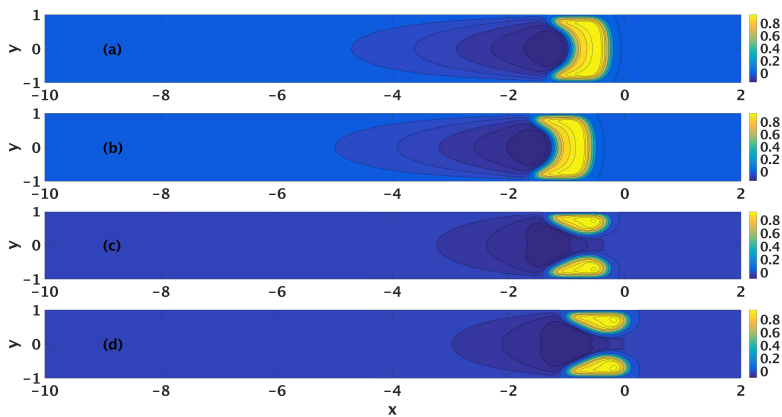}
\caption{Contour plots of the traveling {solitons} for $d=8.8e-4$.  Plot (a) shows the profile {$u$} as 
computed using~\algref{alg:alg1}, rescaled to variables $(x,y)$.  
The profile is then used to initiate a parabolic solver, run until 
{the soliton} propagates one domain length.  Plot (b) shows the 
resulting profile, shifted back horizontally to compare with the 
initial profile.  For the two-{soliton} solutions, plot (c) shows the 
initial profile and (d) shows the final profile, shifted back horizontally.}  
\label{fig:fig2d_5}
\end{figure}

% summary 
\section{Summary and {future work}}\label{sec:summary} 
We have provided an iterative method to calculate traveling {solitons and 
unstable traveling} pulse solutions for the~\fhn equations.  It is a steepest descent 
method based on the minimization 
of a functional within a certain admissible set.  The infimum of the functional 
over the admissible set, denoted by $\Jc$, depends on a parameter $c$ that 
represents wave speed.  Traveling pulses are identified as roots of the functional; 
$\Jc=0$. We have demonstrated that the method is robust. For example, some tests 
revealed that initial guesses employed for our method would not suffice as initial 
conditions in a parabolic solver to try to compute a {soliton}.  The computations 
also support the asymptotic relationship~\eqref{eqn:fastSpeed} that applies to the 
{solitons (observed as the fastest pulses)}, given a set of physical parameters.  
This provides mutual validation. We {computed} both stable 
and unstable traveling pulses for 
moderate values of the parameter $d$, no traveling pulses for large values of 
$d$ and a unique, stable traveling pulse for small $d$.  
{Solitons} were tested using a parabolic solver and {observed to be 
stable}.  

%their shapes and speeds persist for a large number of time steps.  
%Due to the slow decay to $0$ of the tails for the wave profiles, a very long 
%computational domain is needed to implement the homogeneous Dirichlet boundary 
%conditions.  However, we derive asymptotic boundary conditions that largely 
%alleviate this difficulty.  These latter boundary conditions enable us to use 
%a computational domain that is around $90-95\%$ shorter, resulting in a 
%large reduction in cost.  

We also observe that as $d$ becomes small,  the fastest wave speed for the 
soliton becomes large, the pulse width (measured in an 
appropriate sense) becomes wide, and the tail decay rate becomes slow.  
%In this 
%case, the functional value our method seeks to minimize becomes more 
%challenging to compute, due to the weight $e^x$ in the functional 
%vanishing exponentially fast as $x\to -\infty$.  Since our current stopping 
%criterion relies entirely on the functional values, this phenomenon can 
%result in a premature termination of the algorithm.  We verified in one test 
%that the algorithm still works well if it is allowed to continue to iterate 
%beyond the premature stopping point.  In future work, we 
%will investigate a more robust stopping criterion; for example by studying the
%difference in wave profiles for successive iterates.  
Due to the steep 
wave front but otherwise smooth and slowly-decaying tail, our use of uniform 
grids with finite difference methods is not optimal.  
{This could be addressed through the} 
use of adaptive methods. %, which are better suited to this sort of solution.  
For example, the class of $hp$-adaptive finite element methods have previously 
enjoyed success for problems with a wide range of scales 
%and can incorporate 
%the asymptotic boundary conditions in a natural way 
(see {\em e.g.}~\cite{DOP1988}).  

%Our method may be adapted easily for domains in multiple dimensions with boundaries.  
In two dimensions of space, we observed a bifurcation 
{that qualitatively separates single and double traveling soliton solutions.}  
The splitting of the {solitons} from one to two {spots} serves to lower 
the functional energy $\Jc$ as $c$ decreases (below around $c\approx 6$ in our 
examples).  For a narrow range of parameter values, this enables $\Jc$ to drop below 
zero multiple times as $c$ changes, resulting in four traveling pulse solutions 
{for a single set of parameters, with their speeds distinct.}  
Two solutions are unstable.  The {two-soliton} solutions have smaller wave speeds than 
the {single-soliton} solutions.  

In some on-going work~\cite{CCD_new}, we will demonstrate how to 
%modify our asymptotic boundary conditions and 
use our algorithm to find traveling fronts as well {in 2D}.  
In fact, for the same physical parameters, fronts and pulses can co-exist.
Our steepest descent method can find many traveling {waves} 
independently {for systems with a variational structure}, but it could also serve as 
a {robust tool to augment the use of 
continuation methods, which may have difficulty in multiple dimensions sometimes 
(see~\secref{sec:introduction}).  Conceivably, one might also use the global 
property to create an {\em ad-hoc} continuation-steepest descent method that can take 
larger steps along a bifurcation curve, saving total computational expense for detailed 
explorations of parameter space.}  
%way to initiate a numerical continuation 
%algorithm to explore the full range of solutions in parameter space.  This 
%could be useful in particular if there is a desire to start a continuation method 
%on different parts of a bifurcation curve.  

%Hence it offers an 
%alternative to continuation methods in the sense that multiple traveling waves 
%can be found without a good initial guess of their profiles, and without the need 
%to follow along bifurcation curves.  

% bibliography 
%\section*{References} 
\bibliographystyle{apa}
\bibliography{refs}

\begin{thebibliography}{}

\bibitem[\protect\astroncite{Akhmediev and Ankiewicz}{2005}]{AA2005}
Akhmediev, N. and Ankiewicz, A. (2005).
\newblock {\em Dissipative solitons}, volume 661 of {\em Lecture Notes in
  Phys.}
\newblock Springer, Berlin.

\bibitem[\protect\astroncite{Chen et~al.}{2016}]{CCH2016}
Chen, C.-N., Chen, C.-C., and Huang, C.-C. (2016).
\newblock Traveling waves for the {F}itz{H}ugh-{N}agumo system on an infinite
  channel.
\newblock {\em J. Differential Equations}, 261(6):3010--3041.

\bibitem[\protect\astroncite{Chen and Choi}{2012}]{CC2012}
Chen, C.-N. and Choi, Y.~S. (2012).
\newblock Standing pulse solutions to {F}itz{H}ugh-{N}agumo equations.
\newblock {\em Arch. Ration. Mech. Anal.}, 206(3):741--777.

\bibitem[\protect\astroncite{Chen and Choi}{2015}]{CC2015}
Chen, C.-N. and Choi, Y.~S. (2015).
\newblock {Traveling pulse solutions to FitzHugh-Nagumo equations}.
\newblock {\em Calc. Var. Partial Differential Equations}, 54(1):1--45.

\bibitem[\protect\astroncite{Chen and Choi}{2018}]{CC_multi}
Chen, C.-N. and Choi, Y.-S. (2018).
\newblock Front propagation in both directions and coexistence of traveling
  fronts and pulses, submitted.

\bibitem[\protect\astroncite{Chen et~al.}{}]{CCF}
Chen, C.-N., Choi, Y.-S., and Fusco, N.
\newblock The $\gamma$-limit of traveling waves in the {FitzHugh-Nagumo}
  system, submitted.

\bibitem[\protect\astroncite{Chen et~al.}{2018a}]{CCHR2018}
Chen, C.-N., Choi, Y.-S., Hu, Y., and Ren, X. (2018a).
\newblock Higher dimensional bubble profiles in a sharp interface limit of the
  {F}itz{H}ugh-{N}agumo system.
\newblock {\em SIAM J. Math. Anal.}, 50(5):5072--5095.

\bibitem[\protect\astroncite{Chen et~al.}{2018b}]{CCR2018}
Chen, C.-N., Choi, Y.-S., and Ren, X. (2018b).
\newblock Bubbles and droplets in a singular limit of the {F}itz{H}ugh-{N}agumo
  system.
\newblock {\em Interfaces Free Bound.}, 20(2):165--210.

\bibitem[\protect\astroncite{Choi et~al.}{}]{CCD_new}
Choi, Y.~S., Connors, J., and Duraihem, F.
\newblock Co-existence of a traveling pulse with multiple moving fronts: a
  numerical investigation.
\newblock {\em In preparation}.

\bibitem[\protect\astroncite{Choi and McKenna}{1993}]{CM1993}
Choi, Y.~S. and McKenna, P.~J. (1993).
\newblock A mountain pass method for the numerical solution of semilinear
  elliptic problems.
\newblock {\em Nonlinear Anal.}, 20(4):417--437.

\bibitem[\protect\astroncite{Dancer et~al.}{2007}]{DRY2007}
Dancer, E.~N., Ren, X., and Yan, S. (2007).
\newblock On multiple radial solutions of a singularly perturbed nonlinear
  elliptic system.
\newblock {\em SIAM J. Math. Anal.}, 38(6):2005--2041.

\bibitem[\protect\astroncite{Devloo et~al.}{1988}]{DOP1988}
Devloo, P., Oden, J.~T., and Pattani, P. (1988).
\newblock An h-p adaptive finite element method for the numerical simulation of
  compressible flow.
\newblock {\em Computer Methods in Applied Mechanics and Engineering},
  70(2):203 -- 235.

\bibitem[\protect\astroncite{Ei et~al.}{2006}]{EMN2006}
Ei, S.-I., Mimura, M., and Nagayama, M. (2006).
\newblock Interacting spots in reaction diffusion systems.
\newblock {\em Discrete Contin. Dyn. Syst.}, 14(1):31--62.

\bibitem[\protect\astroncite{Kawaguchi and Mimura}{2008}]{KM2008}
Kawaguchi, S. and Mimura, M. (2008).
\newblock Synergistic effect of two inhibitors on one activator in a
  reaction-diffusion system.
\newblock {\em Phys. Rev. E (3)}, 77(4):046201, 17.

\bibitem[\protect\astroncite{Lentini and Keller}{1980}]{LK1980}
Lentini, M. and Keller, H.~B. (1980).
\newblock Boundary value problems on semi-infinite intervals and their
  numerical solution.
\newblock {\em SIAM J. Numer. Anal.}, 17(4):577--604.

\bibitem[\protect\astroncite{Liehr}{2013}]{L2013}
Liehr, A.~W. (2013).
\newblock {\em Dissipative solitons in reaction diffusion systems}.
\newblock Springer Series in Synergetics. Springer, Heidelberg.
\newblock Mechanisms, dynamics, interaction.

\bibitem[\protect\astroncite{Muratov}{2004}]{Mur2004}
Muratov, C.~B. (2004).
\newblock A global variational structure and propagation of disturbances in
  reaction-diffusion systems of gradient type.
\newblock {\em Discrete Contin. Dyn. Syst. Ser. B}, 4(4):867--892.

\bibitem[\protect\astroncite{Nishiura}{2002}]{Nish2002}
Nishiura, Y. (2002).
\newblock {\em Far-from-equilibrium dynamics}, volume 209 of {\em Translations
  of Mathematical Monographs}.
\newblock American Mathematical Society, Providence, RI.
\newblock Translated from the 1999 Japanese original by Kunimochi Sakamoto,
  Iwanami Series in Modern Mathematics.

\bibitem[\protect\astroncite{Nishiura et~al.}{2007}]{NTYU2007}
Nishiura, Y., Teramoto, T., Yuan, X., and Ueda, K.-I. (2007).
\newblock Dynamics of traveling pulses in heterogeneous media.
\newblock {\em Chaos}, 17(3):037104, 21.

\bibitem[\protect\astroncite{Purwins et~al.}{2005}]{PBL2005}
Purwins, H.-G., B\"{o}deker, H.~U., and Liehr, A.~W. (2005).
\newblock Dissipative solitons in reaction-diffusion systems.
\newblock In {\em Dissipative solitons}, volume 661 of {\em Lecture Notes in
  Phys.}, pages 267--308. Springer, Berlin.

\bibitem[\protect\astroncite{Ren and Wei}{2003}]{RW2003}
Ren, X. and Wei, J. (2003).
\newblock On energy minimizers of the diblock copolymer problem.
\newblock {\em Interfaces Free Bound.}, 5(2):193--238.

\bibitem[\protect\astroncite{Stoer and Bulirsch}{2010}]{SB2010}
Stoer, J. and Bulirsch, R. (2010).
\newblock {\em {Introduction to Numerical Analysis}}.
\newblock Springer-Verlag, 3 edition.

\bibitem[\protect\astroncite{Turing}{1952}]{Tur1952}
Turing, A.~M. (1952).
\newblock The chemical basis of morphogenesis.
\newblock {\em Philos. Trans. Roy. Soc. London Ser. B}, 237(641):37--72.

\bibitem[\protect\astroncite{van Heijster et~al.}{2010}]{HDKP2010}
van Heijster, P., Doelman, A., Kaper, T.~J., and Promislow, K. (2010).
\newblock Front interactions in a three-component system.
\newblock {\em SIAM J. Appl. Dyn. Syst.}, 9(2):292--332.

\bibitem[\protect\astroncite{van Heijster and Sandstede}{2014}]{HS2014}
van Heijster, P. and Sandstede, B. (2014).
\newblock Bifurcations to travelling planar spots in a three-component
  {F}itz{H}ugh-{N}agumo system.
\newblock {\em Phys. D}, 275:19--34.

\bibitem[\protect\astroncite{VonNeumann and Richtmyer}{1950}]{VNR1950}
VonNeumann, J. and Richtmyer, R.~D. (1950).
\newblock A method for the numerical calculation of hydrodynamic shocks.
\newblock {\em Journal of Applied Physics}, 21(3):232--237.

\end{thebibliography}

\appendix
\section{~\algref{alg:alg1} with asymptotic boundary conditions}\label{sec:asymbc} 
The traveling pulses decay to zero as $|x| \to \infty$ for both one- or 
two-dimensional domains.  
Instead of imposing the zero Dirichlet boundary condition on the bounded 
computational domain in~\algref{alg:alg1}, we 
{derive asymptotic boundary conditions that provide} 
%explore if an alternative may enhance 
%the convergence rate of our steepest descent algorithm with respect to the domain 
%length.  This can only be accomplished by having
 better information on solution behaviors as $|x| \to \infty$ than just knowing 
that they go to zero.  
 If the governing equation is linear, eliminating the blow-up mode will yield the asymptotic information.
 Similar conclusions can be drawn by linearizing the nonlinear equations about the zero equilibrium point.
Such an idea has been given in, {for example,} \cite{LK1980}.  

Specifically, we derive 
asymptotic boundary conditions to solve for $v=\Lc w$ and $w^*=\Q (w)$  in Steps 1 
and 3 of~\algref{alg:alg1}.  In practice, we have found that the minimum 
computational domain lengths are restricted by these calculations, whereas the 
integrations in Steps 2 and 6 exhibit faster convergence.  This is because as $x\to \infty$, 
 the pulse profiles vanish very quickly, while when $x\to-\infty$ the term $e^x$ 
in the integrands forces the fast convergence of the integrals even though the 
profiles do not vanish as quickly in this direction.  For these 
reasons, we neglect further discussion of errors in integrated quantities due to 
the truncation of the domain.  

\subsection{Computing ${\Lc w}$ and $w^*$ with a given $w$ in one dimension} \label{sec:v_asym}
Suppose a function $w$ is defined on 
the real line $(-\infty,\infty)$; 
however it is known only on the interval $[a,b]$. 
First, we will construct asymptotic boundary conditions for Step 1 in~\algref{alg:alg1}. 
 As $w$ serves as a guess of the minimizer $u$ which decays to zero at infinity,
we assume $w$ and $w'$ are $o(1)$ outside the interval $[a,b]$. 
 Let $v=\Lc w$. Then $v''+v'- \frac{\gamma}{c^2}v= -\frac{w}{c^2}$
on $(-\infty,\infty)$,
which is equivalent to the system
\begin{equation} \label{eqn:v_sys}
\vectwo{v}{z}'=B \vectwo{v}{z} - \vectwo{0}{\frac{w}{c^2}}
\end{equation}
where $B=\mattwo{0}{1}{\frac{\gamma}{c^2}}{-1}$. The eigenvalues of $B$ are given by
\begin{equation} \label{eqn:nu}
\left\{ \nu_1\, ,\, \nu_2 \right\}= \left\{\frac{1}{2} \left(-1 - \sqrt{1+ \frac{4 \gamma}{c^2}}\right)\, , \, \frac{1}{2} \left(-1 + \sqrt{1+ \frac{4 \gamma}{c^2}}\right)\right\}, 
\end{equation}
with $\nu_1<-1<0<\nu_2$. Correspondingly, ${\bf L}_1= \vectwo{-\nu_2}{1}$ and ${\bf L}_2 =\vectwo{-\nu_1}{1}$
are the left eigenvectors of $B$ for $\nu_1$ and $\nu_2$, respectively.
By taking the scalar product of ${\bf L}_1$ with \eqref{eqn:v_sys}, we obtain
\begin{equation} \label{eqn:Phi1}
\Phi_1'= \nu_1 \Phi_1 - \frac{w}{c^2} 
\end{equation}
where $\Phi_1 \equiv {\bf L}_1 \cdot \vectwo{v}{z}=- \nu_2 v+z$. This first order equation can be integrated to give
\[
\Phi_1(x) = - \int_{-\infty}^x e^{\nu_1(x-t)} \frac{w(t)}{c^2} dt ;
\]
the arbitrary constant associated with the complementary solution has to be set to zero for $\Phi_1$
to stay bounded as $x \to -\infty$.
It follows that
\begin{eqnarray*}
\nu_1 \Phi_1(a)-\frac{w(a)}{c^2} &=& \frac{\nu_1}{c^2} \int_{-\infty}^a e^{\nu_1(a-t)} (w(a)-w(t)) \, dt \\
&=& - \frac{1}{c^2} \int_{-\infty}^a e^{\nu_1(a-t)} w'(t) \, dt.
\end{eqnarray*}
Hence
\begin{eqnarray*}
\left|\nu_1 \Phi_1(a)-\frac{w(a)}{c^2} \right| &\leq & \frac{|o(1)|}{c^2} \int_{-\infty}^a e^{\nu_1(a-t)}  \, dt. \\
& = & \frac{|o(1)|}{|\nu_1|\,  c^2} \;.
\end{eqnarray*}
It is therefore natural to impose the boundary condition $\nu_1 \Phi_1=\frac{w}{c^2}$ at $x=a$; which
amounts to
\begin{equation} \label{eqn:left_bc}
v'- \nu_2 v= \frac{w}{\nu_1 c^2} \quad \mbox{at} \;\; x=a.
\end{equation} 
This is like setting the right hand side of \eqref{eqn:Phi1} to zero at $x=a$.
A similar analysis for large positive $x$ using $\Phi_2= {\bf L}_2 \cdot \vectwo{v}{z}$ leads to 
\begin{equation} \label{eqn:right_bc}
v'- \nu_1 v= \frac{w}{\nu_2 c^2} \quad \mbox{at} \;\; x=b.
\end{equation}
\eqref{eqn:left_bc} and \eqref{eqn:right_bc} are the asymptotic boundary conditions 
used when solving for $v=\Lc w$.

\begin{remark} \label{remark4}
If $w$ goes to different constants as $x \to  \pm \infty$ but with $w'=o(1)$ beyond $[a,b]$, the above argument
can be modified to derive some different asymptotic boundary conditions. 
This observation will have implications in case one studies a traveling front problem numerically.
\end{remark}

We will now compute $w^*$ from~\eqref{eqn:wStar} in Step 3 of~\algref{alg:alg1} 
using asymptotic boundary conditions. Let $\hat{w}=dc^2 w -\Lc w +f(w)$ denote
the known right hand side of~\eqref{eqn:wStar}.
 Compare this problem with the equation on $v$ in~\secref{sec:v_asym}. By substituting $\gamma/c^2$ by $1$
 and $w/c^2$ by $\hat{w}$, the new eigenvalues now are $\nu_1^*=-\frac{1}{2}(1+\sqrt{5})$ and 
 $\nu_2^*=\frac{1}{2}(\sqrt{5}-1)$, and the asymptotic boundary conditions are given by
 \begin{eqnarray}
{w^*}{\,'}- \nu_2^* w^* &=  \frac{\hat{w}}{\nu_1^*} & \mbox{at} \;\; x=a, \label{eqn:*left_bc} \\
{w^*}{\,'}- \nu_1^* w^* & = \frac{\hat{w}}{\nu_2^*} & \mbox{at} \;\; x=b. \label{eqn:*right_bc}
\end{eqnarray} 

\subsection{Computing ${\Lc w}$ and $w^*$ with a given $w$ in two dimensions} \label{sec:v_asym2D}
We take $w=w(x,y)$ on the infinite strip $(-\infty,\infty)\times [-L,L]$ 
and derive asymptotic boundary conditions to apply on the truncated domain 
$\Omega=[a,b]\times [-L,L]$, first for $v=\Lc w$.  At $y=-L$ and $y=L$ 
the boundary values are $w=v=0$, for all $x\in\mathbb{R}$.  Fourier 
expansions for $v=\Lc w$ and $w$ are 
\begin{equation}
\begin{aligned}
w(x,y) &= \sum_{j=1}^\infty \hat{w}_j (x) \sin \left(\frac{j\pi (y+L)}{2L}\right) , \\ 
v(x,y) &= \sum_{j=1}^\infty \hat{v}_j (x) \sin \left(\frac{j\pi (y+L)}{2L}\right) .
\end{aligned}
\label{eqn:fourier}
\end{equation}
Insert the relations~\eqref{eqn:fourier} into the equation 
$\Delta v +v_x -\frac{\gamma}{c^2} v =-\frac{1}{c^2} w$: 
\begin{multline*}
\sum_{j=1}^\infty \left( \hat{v}_j'' (x)+\hat{v}_j' (x)-\left(\frac{j^2\pi^2}{4L^2}+\frac{\gamma}{c^2}\right) \hat{v}_j (x) \right) \sin \left(\frac{j\pi (y+L)}{2L}\right)
= \\ 
\sum_{j=1}^\infty -\frac{1}{c^2}\hat{w}_j (x) \sin \left(\frac{j\pi (y+L)}{2L}\right) .
\end{multline*}
Then the Fourier coeffcients satisfy 
\begin{equation}
\hat{v}_j'' (x)+\hat{v}_j' (x)-\left(\frac{j^2\pi^2}{4L^2}+\frac{\gamma}{c^2}\right) \hat{v}_j (x) 
= -\frac{1}{c^2}\hat{w}_j (x) .
\label{eqn:bc2d1}
\end{equation}

In case $x<0$ with $|x|\gg 1$, we assume it holds that 
$|\hat{w}_j (x)|\ll |\hat{w}_1 (x)|$ and 
$|\hat{v}_j (x)|\ll |\hat{v}_1 (x)|$ for all $j>1$, thus
\begin{equation}
\begin{aligned}
w(x,y) &\approx \hat{w}_1 (x) \sin \left(\frac{\pi (y+L)}{2L}\right) , \\ 
v(x,y) &\approx \hat{v}_1 (x) \sin \left(\frac{\pi (y+L)}{2L}\right) .
\end{aligned}
\label{eqn:fourier2}
\end{equation}

Then $\hat{w}_1 (x)$ and $\hat{v}_1 (x)$ have the same behavior in $x$ as 
$w$ and $v$, respectively, as $x\to-\infty$.  Furthermore, these Fourier 
coefficients satisfy~\eqref{eqn:bc2d1}.  By analogy with the derivation 
in~\secref{sec:v_asym}, if $a<0$ with $|a|\gg 1$ then we apply the 
boundary condition 
\begin{equation} \label{eqn:l_bc2d}
\hat{v}_1'- \nu_2 \hat{v}_1= \frac{\hat{w}_1}{\nu_1 c^2} \quad \mbox{at} \;\; x=a 
\end{equation} 
with the eigenvalues 
\begin{equation}
\begin{aligned}
\nu_1 &= \frac{1}{2} \left(-1 - \sqrt{1+\frac{\pi^2}{L^2}+ \frac{4 \gamma}{c^2}}\right) \\ 
\mbox{and} \;\; 
\nu_2 &= \frac{1}{2} \left(-1 + \sqrt{1+\frac{\pi^2}{L^2}+ \frac{4 \gamma}{c^2}}\right) .
\end{aligned}
\label{eqn:nu_2d}
\end{equation}
We combine~\eqref{eqn:fourier2} and~\eqref{eqn:l_bc2d} to derive the 
approximate boundary condition 
\begin{equation} \label{eqn:left_bc2d}
\frac{\partial v}{\partial x}- \nu_2 v= \frac{w}{\nu_1 c^2} \quad \mbox{at} \;\; x=a , \quad 
\mbox{for} \;\; -L < y < L.
\end{equation} 
It is equivalent to applying~\eqref{eqn:left_bc} at $x=a$ for each fixed 
value of $y$, with the adjustment~\eqref{eqn:nu_2d} for the 
eigenvalues~\eqref{eqn:nu}.  
The corresponding boundary condition on the right is 
\begin{equation} \label{eqn:right_bc2d}
\frac{\partial v}{\partial x} - \nu_1 v= \frac{w}{\nu_2 c^2} \quad \mbox{at} \;\; x=b , \quad 
\mbox{for} \;\; -L < y < L, 
\end{equation} 
by analogy with~\eqref{eqn:right_bc} and the derivation 
of~\eqref{eqn:left_bc2d}.  
Here, we assume $b\gg 1$.  Taken together with $v(x,-L)=v(x,L)=0$ for all 
$x\in\mathbb{R}$,~\eqref{eqn:left_bc2d}-\eqref{eqn:right_bc2d} are the 
boundary conditions used to compute $v=\Lc w$ on the truncated domain.  
Note that the asymptotic 
conditions~\eqref{eqn:left_bc2d}-\eqref{eqn:right_bc2d} are compatible with 
the homogeneous Dirichlet boundary conditions for $v$ and $w$ at $y=\pm L$.  

Asymptotic boundary conditions for $w^*$ in Step 3 of~\algref{alg:alg1} 
may be derived quickly by first comparing~\eqref{eqn:wStar} to the 
equation~\eqref{eqn:fhn3_2d} for $v=\Lc w$.  Let 
$\tilde{w}=dc^2 w -\Lc w +f(w)$ denote the known right hand side 
of~\eqref{eqn:wStar}.  By substituting $\gamma/c^2$ with $1$ and $w/c^2$ 
with $\tilde{w}$, the new eigenvalues now are 
\begin{equation}
\left\{ \nu^*_1 \, ,\, \nu^*_2 \right\} = \left\{ \frac{1}{2} \left(-1 - \sqrt{5+\frac{\pi^2}{L^2}}\right) \, , \, 
\frac{1}{2} \left(-1 + \sqrt{5+\frac{\pi^2}{L^2}}\right) \right\}, 
\label{eqn:nuStar_2d}
\end{equation}
and the asymptotic boundary conditions are given by
 \begin{eqnarray}
\frac{\partial w^*}{\partial x}- \nu_2^* w^* &=  \frac{1}{\nu_1^*}\tilde{w} & \mbox{at} \;\; x=a, \;\; -L<y<L, \label{eqn:*left_bc2d} \\
\frac{\partial w^*}{\partial x}- \nu_1^* w^* & = \frac{1}{\nu_2^*}\tilde{w} & \mbox{at} \;\; x=b,  \;\; -L<y<L. \label{eqn:*right_bc2d}
\end{eqnarray}

\end{document}